\newcommand*\patchAmsMathEnvironmentForLineno[1]{%
  \expandafter\let\csname old#1\expandafter\endcsname\csname #1\endcsname
  \expandafter\let\csname oldend#1\expandafter\endcsname\csname end#1\endcsname
  \renewenvironment{#1}%
     {\linenomath\csname old#1\endcsname}%
     {\csname oldend#1\endcsname\endlinenomath}}%
\newcommand*\patchBothAmsMathEnvironmentsForLineno[1]{%
  \patchAmsMathEnvironmentForLineno{#1}%
  \patchAmsMathEnvironmentForLineno{#1*}}%
\renewcommand{\subsectionmark}[1]{}
\newenvironment{enumerateroman}{
\begin{enumerate}[label=(\roman*), leftmargin=0pt,labelindent=2em,itemindent=!]
}{
\end{enumerate}
}
\newenvironment{enumeratearabic*}{
\begin{enumerate*}[label=(\arabic*)] %
}{
\end{enumerate*}
}
\newenvironment{enumerateroman*}{
\begin{enumerate*}[label=(\roman*)] %
}{
\end{enumerate*}
}
\numberwithin{equation}{section}
\newtheorem{theoremcounter}{theoremcounter}[section]
\newtheorem{maintheoremcounter}{maintheoremcounter}
\theoremstyle{plain}
\newtheorem{corollary}[theoremcounter]{Corollary}
\newtheorem{proposition}[theoremcounter]{Proposition}
\newtheorem{theorem}[theoremcounter]{Theorem}
\theoremstyle{plain}
\newtheorem{maintheorem}[maintheoremcounter]{Theorem}
\theoremstyle{definition}
\theoremstyle{remark}
\newtheorem{remark}[theoremcounter]{Remark}
\theoremstyle{nonumberremark}
\newtheorem{remarkcomputation}{Computation}
\let\cal\undefined
 \newcommand{\texpdf}[2]{#1}
 \newcommand{\texpdf}[2]{\texorpdfstring{#1}{#2}}
\newcommand{\tx}{\ensuremath{\text}}
\newcommand{\tbf}{\bfseries}
\newcommand{\thdash}{\nbd th}
\newcommand{\nbd}{\nobreakdash-\hspace{0pt}}
\newcommand{\cal}{\ensuremath{\mathcal}}
\renewcommand{\frak}{\ensuremath{\mathfrak}}
\newcommand{\cO}{\ensuremath{\cal{O}}}
\newcommand{\fraku}{\ensuremath{\frak{u}}}
\newcommand{\rmM}{\ensuremath{\mathrm{M}}}
\newcommand{\rmT}{\ensuremath{\mathrm{T}}}
\newcommand{\td}{\tilde}
\newcommand{\wtd}{\widetilde}
\newcommand{\ov}{\overline}
\newcommand{\llbrkt}{\llbracket}
\newcommand{\rrbrkt}{\rrbracket}
\newcommand{\ra}{\ensuremath{\rightarrow}}
\newcommand{\hra}{\ensuremath{\hookrightarrow}}
\newcommand{\lra}{\ensuremath{\longrightarrow}}
\newcommand{\mto}{\ensuremath{\mapsto}}
\newcommand{\lmto}{\ensuremath{\longmapsto}}
\newcommand{\amid}{\ensuremath{\mathop{\mid}}}
\newcommand{\ZZ}{\ensuremath{\mathbb{Z}}}
\newcommand{\QQ}{\ensuremath{\mathbb{Q}}}
\newcommand{\RR}{\ensuremath{\mathbb{R}}}
\newcommand{\CC}{\ensuremath{\mathbb{C}}}
\renewcommand{\Im}{\ensuremath{\mathrm{Im}}}
\newcommand{\isdiv}{\amid}
\newcommand{\nisdiv}{\ensuremath{\mathop{\nmid}}}
\renewcommand{\pmod}[1]{\ensuremath{\;(\mathrm{mod}\, #1)}}
\newenvironment{psmatrix}{\left(\begin{smallmatrix}}{\end{smallmatrix}\right)}
\newcommand{\Mat}[1]{\ensuremath{\mathrm{Mat}_{#1}}}
\newcommand{\GL}[1]{\ensuremath{\mathrm{GL}_{#1}}}
\newcommand{\SL}[1]{\ensuremath{\mathrm{SL}_{#1}}}
\newcommand{\Mp}[1]{\ensuremath{\mathrm{Mp}_{#1}}}
\renewcommand{\det}{\ensuremath{\mathrm{det}}}
\renewcommand{\ker}{\ensuremath{\mathrm{ker}}}
\newcommand{\HS}{\mathbb{H}}
\newcommand{\lcm}{\ensuremath{\mathrm{lcm}}}
\newcommand{\om}{\ensuremath{\omega}}
\newcommand{\ga}{\ensuremath{\gamma}}
\newcommand{\Ga}{\ensuremath{\Gamma}}
\newcommand{\tdGa}{\ensuremath{\wtd{\Gamma}}}
\newcommand{\GMp}[1]{\ensuremath{\mathrm{GMp}_{#1}}}
\newcommand{\Ind}{\ensuremath{\mathrm{Ind}}}
\newcommand{\rmfe}{\ensuremath{\mathrm{fe}}}
\newcommand{\rmFE}{\ensuremath{\mathrm{FE}}}
\newcommand{\headertitle}{{\normalfont%
  Congruences on square-classes for the partition function
}}
\newcommand{\headerauthors}{%
  M.~Raum%
}
\title{%
  Congruences on Square-Classes\\for the Partition Function
}
\author{%
Martin Raum%
\thanks{The author was partially supported by Vetenskapsr\aa det Grant~2015-04139 and~2019-03551.}%
}
\begin{document}

\thispagestyle{scrplain}
\begingroup
\deffootnote[1em]{1.5em}{1em}{\thefootnotemark}
\maketitle
\endgroup

{\small
\noindent
{\tbf Abstract:}
We considerably improve Ono's and Ahlgren-Ono's work on the frequent occurrence of Ra\-ma\-nu\-jan-type congruences for the partition function, and demonstrate that Ra\-ma\-nu\-jan-type congruences occur in families that are governed by square-classes. We thus elucidate for the first time an exemplary family of congruences found by Atkin-O'Brien. Our results are based on a novel framework that leverages available results on integral models of modular curves via representations of finite quotients of~$\SL{2}(\ZZ)$ or~$\Mp{1}(\ZZ)$. This framework applies to congruences of all weakly holomorphic modular forms.
\\[.35em]
\textsf{\textbf{%
  partition function%
}}%
\hspace{0.3em}{\tiny$\blacksquare$}\hspace{0.3em}%
\textsf{\textbf{%
  Ramanujan-type congruences%
}}%
\hspace{0.3em}{\tiny$\blacksquare$}\hspace{0.3em}%
\textsf{\textbf{%
  Atkin-O'Brien-type congruences%
}}
\\[0.15em]
\noindent
\textsf{\textbf{%
  MSC Primary:
  11P83%
}}
\hspace{0.3em}{\tiny$\blacksquare$}\hspace{0.3em}%
\textsf{\textbf{%
  MSC Secondary:
  05A17,11F30%
}}
\\[.35em]
}

\Needspace*{4em}
\addcontentsline{toc}{section}{Introduction}
\markright{Introduction}
\lettrine[lines=2,nindent=.2em]{\tbf T}{he} partition function records the number of ways~$p(n)$ to write a positive integer~$n$ as the sum of any nonincreasing sequence of positive integers. Ramanujan established the congruences
\begin{align*}
  p(5 n + 4) \;&\equiv\; 0 \;\pmod{5}
\\
  p(7 n + 5) \;&\equiv\; 0 \;\pmod{7}
\\
  p(11 n + 6) \;&\equiv\; 0 \;\pmod{11}
\end{align*}
and suggested further such congruences modulo powers of~$5$, $7$, and~$11$, which Atkin, Lehner, and Watson proved between the 30ies and 60ies. Ramanujan's results ignited interest in both justifications of combinatorial or physical nature and the existence of congruences beyond the then available conjectures. They brought to light a new, flourishing research area. Milestones in the field were subsequently reached by, for instance, Ahlgren, Andrews, Boylan, Dyson, Garvan, Ono, and Swinnerton-Dyer~\cite{andrews-1976,ahlgren-boylan-2003,andrews-garvan-1988,dyson-1944,atkin-swinnerton-dyer-1954}.

In the late 90ies and early 2000s, Ono and Ahlgren-Ono set the gold standard in the existence of congruences~\cite{ono-2000,ahlgren-ono-2001}: Given a prime~$\ell \ge 5$, a positive integer~$m$, and an integer~$\beta$ subjection to a restriction on the square class of $24 \beta - 1 \,\pmod{\ell}$, they showed that a positive proportion of primes~$Q \equiv -1 \,\pmod{24 \ell}$ have the property that
\begin{gather*}
  p\Big( \frac{Q^3 n + 1}{24} \Big) \;\equiv\; 0 \;\pmod{\ell^m}
\end{gather*}
for all integers~$n \equiv 1 - 24 \beta \,\pmod{24 \ell}$ and co-prime to~$Q$. Here and throughout, we adopt the convention that $p(n) = 0$ if~$n$ is not a nonnegative integer, which allows us to relax the condition on~$n$ to $n \equiv 1 - 24 \beta \,\pmod{\ell}$. This most crucially encodes the square-class of~$Q^3 n \slash 24 \,\pmod{\ell}$.

It was remarked in a review of Ahlgren-Ono's work that their result accommodates the shape of any known partition congruence. Previous work had been restricted to congruences~$\forall n \in \ZZ :\, p(A n + B) \equiv 0 \,\pmod{\ell^m}$ with $24 B \equiv 1 \pmod{\ell}$, while now the square-class condition expressed by the inequality~$\big( \frac{24 B - 1}{\ell} \big) \ne \big( \frac{-1}{\ell} \big)$ of Legendre symbols was the only remaining constraint. Radu~\cite{radu-2013} confirmed a conjecture by Ahlgren-Ono that a congruence modulo~$\ell$ of the partition function on the arithmetic progression~$A \ZZ + B$ implies both~$\ell \isdiv A$ and~$\big( \frac{24 B - 1}{\ell} \big) \ne \big( \frac{-1}{\ell} \big)$. In this sense, Ahlgren-Ono's accomplishment indeed exhausts all expected congruences of the partition function. It has not been qualitatively improved since its publication in~2001. We present an innovation in this paper that allows us to move forward.

There are, indeed, two twists to the story, which we elucidate in the present work. Firstly, the third power of~$Q$ in Ahlgren-Ono's result does not subsume the following example, which they happen to highlight in their paper as a motivation:
\begin{gather*}
  p(17303 n + 237)
\;=\;
  p(11^3 \cdot 13 \, n - 4 \cdot 11^2 + 721)
\;\equiv\;
  0
  \;\pmod{13}
\end{gather*}
for all integers~$n$, or equivalently,
\begin{gather*}
  p\Big( \frac{11^2 (11 \cdot 13 n - 96) + 1}{24} \Big)
\;\equiv\;
  0
  \;\pmod{13}
\end{gather*}
for all integers~$n$. While the square-class condition analogous to the one of Ahlgren-Ono holds for~$- 11^2 \cdot 96 \slash 24 \,\pmod{13}$, the decisive point is that the second formulation features a leading factor~$11^2$ as opposed to~$11^3$. It is therefore not covered by Ahlgren-Ono's result, where the leading factor is~$Q^3$. Until now, it was conceivable that congruences as the one in~\cite{ahlgren-ono-2001} with leading factor~$Q^2$ might not occur in equal abundance as those featuring the third power of~$Q$. Our Theorem~\ref{mainthm:abundance} settles this point and shows that they do occur as frequently.

Secondly, the above and all other congruences observed appear in families across square-classes. We have the much stronger result that~$p(11^2 n + 721) \equiv 0 \,\pmod{13}$ for all~$n \in \ZZ$ with~$\big( \frac{n}{11} \big) = \big( \frac{-1}{11} \big)$ and $\big( \frac{n}{13} \big) = \big( \frac{-1}{13} \big)$. We would like to refer to such congruences as of Atkin-O'Brien type, since to the best of our knowledge they were recorded for the first time in Theorem~10 of their joint work~\cite{atkin-obrien-1967}. In the case at hand, they entail Ramanujan-type congruences $p(17303 n + B) \equiv 0 \,\pmod{13}$ for thirty ~$B$ distinct modulo~$17303$:
\begin{align*}
  B
\;\in\;
  \big\{{}
&
  237, 358, 600, 1931, 2778, 4230, 4351, 4956, 5561, 5924, 6892, 8102, 8223, 8949, 
\\&
  9675, 10280, 11248, 11611, 12095, 12216, 12942, 13426, 13668, 14757, 14999,
\\&
  15241, 16088, 16330, 16572, 16935
  \big\}
\tx{.}
\end{align*}
Notably also Ramanujan's original congruences~$p(\ell n - \delta_\ell) \equiv 0 \,\pmod{\ell}$ for all~$n \in \ZZ$, where $\delta_\ell \in \ZZ$ is an inverse of~$-24$ modulo~$\ell$, fit the pattern of Atkin-O'Brien-type congruences, since we have~$p(n) \equiv 0 \,\pmod{\ell}$ for all~$n \in \ZZ$ with~$\big(\frac{n + \delta_\ell}{\ell} \big) = 0$ for $\ell \in \{5, 7, 11\}$.

In Theorem~\ref{mainthm:atkin-obrien} of this paper we offer a fundamentally different perspective on congruences of the partition function, showing that Ramanujan-type congruences of the form~$p(A n + B) \equiv 0 \,\pmod{\ell^m}$ for all~$n \in \ZZ$ imply Atkin-O'Brien-type congruences. In conjunction with the results of Ahlgren-Ono, this shows abundance of congruences~$p(Q^3 n - \delta_{\ell Q^3}) \equiv 0 \,\pmod{\ell^m}$, where~$n \in \ZZ$ is merely required to satisfy a square-class condition modulo~$\ell$ and be coprime to~$Q$, and~$\delta_{\ell Q^3} \in \ZZ$ is an inverse of~$-24$ modulo~$\ell Q^3$. We manage to further strengthen this significantly in Theorem~\ref{mainthm:abundance}, where we reduce the exponent of~$Q$ from three to~two while excluding for~$n$ only one of the three square-classes modulo~$Q$.

\begin{maintheorem}
\label{mainthm:atkin-obrien}
Let~$\ell \ge 5$ be a prime and $m$ a positive integer. Given a positive integer~$A$ co-prime to~$6 \ell$ and an integer~$B$, assume that $p(An + B) \equiv 0 \,\pmod{\ell^m}$ for all $n \in \ZZ$. Let~$A_{\mathrm{sf}}$ be the largest square-free divisor of~$A$, $A' := \gcd\big( A, A_{\mathrm{sf}} (24B - 1) \big)$, and $\delta_{A'}$ an inverse of~$-24$ modulo~$A'$. Then we have

\begin{gather*}
  p\Big( \frac{A' n + 1}{24} \Big)
\;\equiv\;
  0
  \;\pmod{\ell^m}	
\end{gather*}
for $n \in \ZZ$ subject to the condition that for all primes~$p \isdiv A'$ we have $\big(\frac{n}{p}\big) = \big(\frac{24 B - 1}{p}\big)$.
\end{maintheorem}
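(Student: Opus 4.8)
The plan is to translate the congruence into the language of weakly holomorphic modular forms and then to exploit the finite-group symmetry announced in the introduction. Set $g(\tau) = 1/\eta(24\tau) = \sum_{M} a(M)\, q^{M}$ with $q = e^{2\pi i \tau}$, a weakly holomorphic form of weight $-\tfrac12$, where $a(M) = p\big((M+1)/24\big)$ under the standing convention, so that $a(M)$ is supported on $M \equiv -1 \pmod{24}$. In this dictionary the hypothesis $p(An+B)\equiv 0\pmod{\ell^m}$ becomes $a(M)\equiv 0\pmod{\ell^m}$ for every $M\equiv 24B-1\pmod{24A}$, a vanishing along a \emph{single} residue class modulo $24A$, while the assertion to be proved becomes $a(A'n)\equiv 0\pmod{\ell^m}$ whenever $\big(\tfrac{n}{p}\big)=\big(\tfrac{24B-1}{p}\big)$ for every prime $p\isdiv A'$, a vanishing along a \emph{union of square-classes}. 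The whole content of the theorem is therefore the passage from one residue class to its square-class envelope, carried out one prime $p\isdiv A$ at a time.

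First I would realise $g$ as one coordinate of a vector-valued modular form $G=\sum_{r} g_r\,\frake_r$ for the Weil representation $\rho$ of $\Mp{1}(\ZZ)$ attached to the discriminant form $\ZZ/24A\ZZ$ (the theta multiplier of $\eta$, refined at the primes dividing $A$), so that $g_r$ gathers the $a(M)$ with $M\equiv r$. Because $\ell$ is coprime to $6A$, the representation $\rho$ factors through the finite group $\Mp{1}(\ZZ/24A\ZZ)$ and all its matrix entries are algebraic integers that are units above $\ell$; hence the reduction of $G$ modulo $\ell^m$ still carries an action of $\Mp{1}(\ZZ/24A\ZZ)$, and reduction commutes with that action. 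This is exactly where the integral model of the modular curve enters: the covering that trivialises $\rho$ is \'etale over the fibre at $\ell$ precisely because $\ell\nisdiv 24A$, which is what legitimises transporting the symmetry to the mod $\ell^m$ reduction. By the Chinese remainder theorem I would then factor $\Mp{1}(\ZZ/24A\ZZ)$ into its local constituents and argue locally at each $p\isdiv A$, working to depth $p^{v_p(A')}$.

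The local mechanism at $p$ rests on the quadratic nature of the index: the coefficient $a(M)$ is attached to an element $\gamma$ of the discriminant form through a relation $M\equiv-\gamma^{2}$ modulo the level, so the diagonal torus, acting on $\gamma$ by $\gamma\mapsto s\gamma$, acts on the index by $M\mapsto s^{2}M$. Its orbits are exactly the square-classes modulo $p$, which is the representation-theoretic origin of the square-class families. The hypothesis kills the coordinate at the residue $24B-1$, and two families of local elements then propagate this vanishing: the torus spreads it across the square-class, while the metaplectic Fourier (Atkin--Lehner) element at $p$, whose entries are quadratic Gauss sums, relates coordinates of differing $p$-adic valuation and so reaches the valuation prescribed by $A'n$. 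Because $\ell\neq p$ and $\ell$ is odd, these Gauss sums and the accompanying roots of unity are units modulo $\ell^m$, so each step preserves the vanishing outright rather than merely relating it to other coordinates. The exponent $v_p(A')=\min\!\big(v_p(A),\,1+v_p(24B-1)\big)$ is forced at this stage: $v_p(A)$ is the depth to which the progression constrains the index, $1+v_p(24B-1)$ records that the square-class sits one level above the residue, and the regimes $p\nisdiv 24B-1$ and $p\isdiv 24B-1$ are separated by whether the Legendre symbol at $p$ is a unit or zero.

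The step I expect to be the main obstacle is making this propagation genuinely lossless. Applying a group element to $G$ expresses one coordinate as a $\rho$-linear combination of several, so inferring that a single $g_r$ vanishes from the vanishing of another requires that the relevant operators act by \emph{monomial} matrices with unit entries; establishing this, together with the attendant semisimplicity, is precisely what the coprimality of $A$ to $6\ell$ buys, and it is the crux of the argument. A secondary difficulty is the bookkeeping behind $v_p(A')$, in particular matching the orbit reached from the residue $24B-1$ with the prescribed $p$-adic valuation of $A'n$ and the square-class condition on $n$ in the two regimes above. Once the local, lossless propagation is in place at every $p\isdiv A'$, the Chinese remainder theorem reassembles the local vanishings into the claimed congruence, completing the proof.
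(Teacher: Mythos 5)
Your overall skeleton does match the paper's: pass to $\eta^{-1}$, view the hypothesis as the vanishing mod~$\ell^m$ of one component of a vector-valued object on which a finite quotient of~$\Mp{1}(\ZZ)$ acts, justify that this action preserves congruences via the integral theory of modular curves, and then spread the vanishing across a square class using the fact that diagonal (torus) elements act by monomial matrices. These are exactly the paper's Theorem~\ref{thm:ell-kernel-representation} (proved by Deligne--Rapoport's $q$-expansion principle --- note that your ``unit matrix entries'' reasoning alone would not give this; the compatibility of the group action with reduction of Fourier coefficients \emph{is} the deep input, and your \'etaleness remark is the correct source for it) and Proposition~\ref{prop:hecke-T-eigenvector-cartan-orbit} together with Corollary~\ref{cor:congruences-on-square-classes} (in the paper the monomiality is established by an explicit Kubota-cocycle computation). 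Up to the choice of model --- Weil representation versus the induced representation $\rmT_{A}\,\Ind_{\Mp{1}(\ZZ)}\eta^{-1}$ and its $T$-eigenvectors $v_T(M,b;\alpha)$ --- this part of your plan is the paper's argument.

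The genuine gap is your second propagation mechanism, the ``metaplectic Fourier (Atkin--Lehner) element'' that is supposed to move between coordinates of different $p$-adic valuation. Its matrix in the Weil representation is a normalized Gauss-sum (discrete Fourier transform) matrix, all of whose entries are nonzero; it is not monomial. So by your own stated criterion this step is not lossless: applying it to the known vanishing of a single coordinate yields only the vanishing of a full linear combination of coordinates, from which the vanishing of no individual coordinate can be extracted. The obstruction is the shape of the matrix, not the size of its entries, so no unit-entry normalization repairs it. Moreover, the step is unnecessary: the paper never changes valuations at the level of modular forms. After the Cartan step one has congruences on \emph{all} progressions $24A\ZZ + u^2(24B-1)$ with $\gcd(u,A)=1$, and the passage from~$A$ to~$A'=\gcd\big(A,A_{\mathrm{sf}}(24B-1)\big)$ is a purely elementary identity of subsets of~$\ZZ$: the union $\bigcup_{u}\big(u^2(24B-1)+A\ZZ\big)$ coincides with the corresponding union for the smaller modulus~$A'$, because for odd~$p$ the square class of a unit modulo~$p^k$ is already detected by its Legendre symbol modulo~$p$. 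This is the paper's minimal-divisor~$A''$ argument, and it --- not any representation-theoretic operator --- is the source of the exponent $\min\big(v_p(A),\,1+v_p(24B-1)\big)$ you identified. In short, what you dismissed as ``secondary bookkeeping'' is the entire remaining content of the proof and is elementary, while the Gauss-sum mechanism you propose in its place cannot work.
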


\begin{maintheorem}
\label{mainthm:abundance}
If $\ell \ge 5$ is a prime, $m$ is a positive integer, and~$\epsilon_\ell$ is either~$0$ or~$-\left( \frac{-6}{\ell} \right)$, then for at least one~$\epsilon_Q = \pm 1$ a positive proportion of primes~$Q \equiv -1 \,\pmod{24 \ell}$ have the property that
\begin{gather*}
  p\Big( \frac{Q^2 n + 1}{24} \Big)
\;\equiv\;
  0
  \;\pmod{\ell^m}
\end{gather*}
for all $n \in \ZZ$ with $\big( \frac{n}{\ell} \big) = \epsilon_\ell$ and $\big( \frac{n}{Q} \big) \in \{0, \epsilon_Q \}$.
\end{maintheorem}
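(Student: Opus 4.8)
The plan is to recast the abundance statement as a density statement about the action of the Hecke operator $T_{Q^2}$ on the mod~$\ell^m$ reduction of the partition form, and to let the square-class propagation underlying Theorem~\ref{mainthm:atkin-obrien} do the geometric work. First I would realise $1/\eta$ as a weight $-\tfrac12$ weakly holomorphic form and, following the paper's framework, view its reduction $\ov f$ modulo~$\ell^m$ inside the finite-dimensional module attached to a representation of a finite quotient of~$\Mp{1}(\ZZ)$. In this model the coefficient indexed by the discriminant $D = 24N - 1$ is $p(N)$, and the diagonal torus in each local factor $\SL{2}(\ZZ/p)$ rescales $D$ by squares; its orbits on coefficients are therefore exactly the square-classes modulo~$p$, which is the representation-theoretic source of every square-class congruence. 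Here $\ov f$ is supported on one square-class modulo~$\ell$, and the dichotomy $\epsilon_\ell \in \{0, -(\tfrac{-6}{\ell})\}$ records whether that class is $\ell \isdiv D$ (the degenerate, Ramanujan-like alternative) or the quadratic class $-(\tfrac{-6}{\ell})$ singled out by the discriminant form of~$1/\eta$.

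I would then call on the Ahlgren-Ono density machinery, but aimed at a target different from theirs. Their argument selects a positive proportion of primes $Q \equiv -1 \pmod{24\ell}$ for which $T_{Q^2}$ annihilates $\ov f$ modulo~$\ell^m$, and it is precisely the vanishing eigenvalue that produces the leading factor~$Q^3$. Instead I would single out the positive-density set of~$Q$ whose Frobenius realises the finer local datum coupling the $T_{Q^2}$-eigenvalue~$\lambda_Q$ to the quadratic character $(\tfrac{\cdot}{Q})$ and to $Q \bmod \ell^m$. Reading off the half-integral weight Hecke relation at~$Q$, the coefficient $a(Q^2 D)$ is expressed through $a(D)$ and $a(D/Q^2)$ with a multiplier that depends on $(\tfrac{D}{Q})$; the finer datum is exactly the requirement that this multiplier vanish on one quadratic class $(\tfrac{D}{Q}) = \epsilon_Q$ and not on the other. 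Since the two admissible data differ by the sign of~$\epsilon_Q$ and cannot both be forced at once, only one of them is guaranteed to occur with positive density, which is the origin of the clause \emph{for at least one~$\epsilon_Q = \pm 1$}.

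With such a~$Q$ in hand I would assemble the congruence. On the class $(\tfrac{n}{Q}) = \epsilon_Q$ the Hecke relation turns the vanishing multiplier into $a(Q^2 n) \equiv 0$ on a single residue, and Theorem~\ref{mainthm:atkin-obrien}'s propagation along the torus orbit upgrades this to the full quadratic class, simultaneously pinning the companion condition $(\tfrac{n}{\ell}) = \epsilon_\ell$ coming from the support of~$\ov f$. The class $(\tfrac{n}{Q}) = 0$ is handled separately: there the character factor kills the middle term, and the relation reduces $a(Q^2 n)$ to coefficients already known to vanish, so that both admissible values of $(\tfrac{n}{Q})$ are covered while the third, $(\tfrac{n}{Q}) = -\epsilon_Q$, is necessarily discarded. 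This is exactly the passage from $Q^3$ to~$Q^2$ at the cost of excluding one square-class modulo~$Q$.

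The main obstacle I anticipate is the density step. Ahlgren-Ono need only hit the \emph{fixed} target $\lambda_Q \equiv 0$, for which the image of the residual Galois representation attached to~$\ov f$ suffices; here the target is a nonzero value that varies with $Q \bmod \ell^m$ and is entangled with the metaplectic, square-class data, so one must control the \emph{joint} equidistribution of the Hecke eigenvalue and the $\Mp{1}(\ZZ/\ell^m)$-datum rather than the eigenvalue alone. Proving that at least one admissible pair $(\epsilon_\ell, \epsilon_Q)$ is attained on a set of primes of positive density — and carefully tracking the $Q$-adic valuation of~$n$ so that the $(\tfrac{n}{Q}) = 0$ reduction really terminates in vanishing coefficients — is where the representation-of-finite-quotients framework is indispensable and where the bulk of the work will lie.
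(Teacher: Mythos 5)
Your opening paragraph describes the paper's framework accurately, but the proof you build on it diverges in a way that opens a genuine gap: you relocate the heart of the argument into a density statement that the paper never needs and that is not available. The paper's proof of Theorem~\ref{mainthm:abundance} uses \emph{no} new density input at all. It quotes Theorem~1 of~\cite{ahlgren-ono-2001} verbatim --- that is, the classical target ``$\rmT_{Q^2}$-eigenvalue $\equiv 0$'', with leading factor~$Q^3$ and a coprimality restriction --- and then performs the entire passage from~$Q^3$ to~$Q^2$ inside the metaplectic representation machinery: Corollary~\ref{cor:improving-nonzero-square-class} removes the coprimality condition (descending a modulus-$Q^4\ell$ congruence to modulus~$Q^3\ell$), Corollary~\ref{cor:improving-0-square-class} converts the resulting congruence on $Q^3$-divisible indices into one on indices exactly divisible by~$Q^2$, shifted by $u'M\slash Q$ for \emph{some} unit~$u'$ modulo~$Q$, and Corollary~\ref{cor:congruences-on-square-classes} propagates that single progression along its square class. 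The clause ``for at least one~$\epsilon_Q$'' is the uncontrolled square class of~$u'$, namely $\epsilon_Q = \big(\tfrac{24 \ell u'}{Q}\big)$ --- a representation-theoretic indeterminacy, not a density dichotomy as you posit. By contrast, your plan asks for a positive proportion of~$Q$ for which the eigenvalue~$\lambda_Q$ of~$\rmT_{Q^2}$ is congruent to a specific \emph{nonzero} value tied to $\big(\tfrac{\cdot}{Q}\big)$ and to $Q \bmod \ell^m$. The Serre--Ahlgren--Ono mechanism cannot be aimed at such a target: the partition-related form is not a Hecke eigenform, its obstruction space splits into several eigenforms with essentially independent Galois representations, and the only element forcing all their eigenvalues simultaneously is complex conjugation, whose trace is~$0$ by oddness. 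No conjugacy-class condition forces a common \emph{nonzero} eigenvalue across independent representations (worse, your target value itself varies with $Q \bmod \ell^m$). Positive density of such nonzero eigenvalue congruences is essentially an open problem, so the step you defer as ``the bulk of the work'' is not a technical chore but an obstruction that the paper's route is specifically designed to avoid.

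There is also an internal inconsistency in how you cover the class $\big(\tfrac{n}{Q}\big) = 0$. Write the half-integral weight Hecke relation as $a(Q^2 n) \equiv \big(\lambda_Q - \mu_Q \big(\tfrac{n}{Q}\big)\big)\, a(n) - \nu_Q\, a(n \slash Q^2) \pmod{\ell^m}$ with units~$\mu_Q$, $\nu_Q$. If $\lambda_Q \equiv \epsilon_Q \mu_Q \not\equiv 0$, then indeed $a(Q^2 n) \equiv 0$ on the class $\big(\tfrac{n}{Q}\big) = \epsilon_Q$; but for $n = Qm$ with $Q \nmid m$ the relation gives $a(Q^3 m) \equiv \lambda_Q\, a(Qm)$, and $a(Qm)$ is \emph{not} known to vanish. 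Your assertion that on the $0$ class ``the relation reduces $a(Q^2 n)$ to coefficients already known to vanish'' is valid only when $\lambda_Q \equiv 0$. Thus the $0$ class forces the target $\lambda_Q \equiv 0$ while the $\epsilon_Q$ class forces $\lambda_Q \not\equiv 0$: the two halves of the statement cannot both come from a single eigenvalue congruence. In the paper they have different sources --- the $0$ class is inherited from Ahlgren--Ono's $Q^3$ congruence once the coprimality condition is stripped away, and the nonzero class is \emph{created} by Corollary~\ref{cor:improving-0-square-class}; gluing the two together is exactly what the representation theory buys.
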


Treneer succeeded in extending results of~\cite{ono-2000,ahlgren-ono-2001} to weakly holomorphic modular forms in general~\cite{treneer-2006,treneer-2008}. Her work suggests that also our Theorems~\ref{mainthm:atkin-obrien} and~\ref{mainthm:abundance} should be instances of a framework that applies to all weakly holomorphic modular forms. This is indeed the case. Analogous statements for all weakly holomorphic modular forms are available in Corollaries~\ref{cor:congruences-on-square-classes},~\ref{cor:improving-0-square-class}, and~\ref{cor:improving-nonzero-square-class}. Our strategy to prove them is to leverage results of Deligne-Rapoport~\cite{deligne-rapoport-1973} on compactifications of modular curves over~$\ZZ$ in Theorem~\ref{thm:ell-kernel-representation}. The key observation, stated in Corollary~\ref{cor:congruence-on-arithmetic-progression-hecke-T-eigenvector}, is that congruences of Fourier coefficients of weakly holomorphic modular forms correspond to vectors in specific induced representations of~$\SL{2}(\ZZ)$ or~$\Mp{1}(\ZZ)$, and that by Theorem~\ref{thm:ell-kernel-representation} the submodule associated with congruences modulo~$\ell$ is a subrepresentation. The action of the Cartan subgroups of~$\SL{2}$ and~$\Mp{1}$ on these subrepresentations gives rise to Theorem~\ref{mainthm:atkin-obrien} and Corollary~\ref{cor:congruences-on-square-classes}. Theorem~\ref{mainthm:abundance} and Corollaries~\ref{cor:improving-0-square-class} and~\ref{cor:improving-nonzero-square-class} follow from a more fine-grained analysis of those representations. The explicit description of the extension~$\Mp{1}(\ZZ)$ of~$\SL{2}(\ZZ)$ by~$\{ \pm 1 \}$ due to Kubota~\cite{kubota-1969, weil-1964} plays an important role in our proof.

\paragraph{Acknowledgment}

We are grateful to Olivia Beckwith and Olav Richter for stimulating conversations, and to Claudia Alfes-Neumann, Caihua Luo, Sven Raum, and Olav Richter for helpful remarks on an earlier version of the manuscript.

\section{Abstract spaces of modular forms}

\paragraph{The upper half plane}

The Poincar\'e upper half plane is defined as
\begin{gather*}
  \HS
\;:=\;
  \big\{ \tau \in \CC \,:\, \Im(\tau) > 0 \big\}
\tx{.}
\end{gather*}
It carries an action of~$\GL{2}^+(\RR) := \{ \ga \in \GL{2}(\RR) \,:\, \det(\ga) > 0 \}$ by M\"obius transformations
\begin{gather*}
  \begin{pmatrix} a & b \\ c & d \end{pmatrix} \tau
\;:=\;
  \frac{a \tau + b}{c \tau + d}
\tx{.}
\end{gather*}

\paragraph{The metaplectic group}

The general metaplectic group~$\GMp{1}(\RR)$ is a nontrivial central extension
\begin{gather*}
  1 \lra \mu_2 \lra \GMp{1}(\RR) \lra \GL{2}(\RR) \lra 1
\tx{,}\quad
  \mu_{2} := \{ \pm 1 \}
\tx{.}
\end{gather*}
A common realization of~$\GMp{1}(\RR)$ is given by
\begin{gather*}
  \GMp{1}(\RR)
\,:=\,
  \big\{
  (\ga, \omega) \,:\,
  \ga = \begin{psmatrix} a & b \\ c & d \end{psmatrix} \in \GL{2}(\RR),\,
  \omega :\, \HS \ra \CC,\,
  \omega(\tau)^2 = c \tau + d
  \big\}
\tx{,}
\end{gather*}
in which case multiplication in~$\GMp{1}(\RR)$ is given by
\begin{gather*}
  (\ga, \omega) \cdot (\ga', \omega')
\,:=\,
  (\ga \ga',\, \omega \circ \ga' \cdot \omega')
\tx{.}
\end{gather*}
Throughout, the letter~$\ga$ may denote elements of both~$\GL{2}(\RR)$ and~$\GMp{1}(\RR)$. We let~$\GMp{1}(\QQ)$ and~$\Mp{1}(\ZZ)$ be the preimages of~$\GL{2}(\QQ)$ and~$\SL{2}(\ZZ)$ under the projection~$\GMp{1}(\RR) \ra \GL{2}(\RR)$.

\paragraph{Congruence subgroups}

A subgroup~$\Ga \subseteq \Mp{1}(\ZZ)$ is called a congruence subgroup if its projection to~$\SL{2}(\ZZ)$ is a congruence subgroup. We fix notation for the special cases
\begin{align*}
  \Ga_0(N)
\;&:=\;
  \big\{
  \begin{psmatrix} a & b \\ c & d \end{psmatrix} \in \SL{2}(\ZZ)
  \,:\,
  c \equiv 0 \,\pmod{N}
  \big\}
\tx{,}
\\
  \Ga(N)
\;&:=\;
  \big\{
  \begin{psmatrix} a & b \\ c & d \end{psmatrix} \in \SL{2}(\ZZ)
  \,:\,
  b,c \equiv 0 \,\pmod{N},\, a,d \equiv 1 \,\pmod{N}
  \big\}
\tx{,}
\end{align*}
and write~$\tdGa_0(N)$ and~$\tdGa(N)$ for their preimages under the projection from~$\Mp{1}(\ZZ)$ to~$\SL{2}(\ZZ)$.

We record for later use that the integral metaplectic group~$\Mp{1}(\ZZ)$ is generated by the elements
\begin{gather*}
  S
\,:=\,
  \big( \begin{psmatrix} 0 & -1 \\ 1 & 0 \end{psmatrix}, \tau \mto \sqrt{\tau} \big)
\tx{,}\quad
  T
\,:=\,
  \big( \begin{psmatrix} 1 & 1 \\ 0 & 1 \end{psmatrix}, \tau \mto 1 \big)
\tx{,}
\end{gather*}
where $\sqrt{\tau}$ is the principal branch of the holomorphic square root on~$\HS$.

\paragraph{Slash actions}

The action~$\GL{2}^+(\RR) \circlearrowright \HS$ in conjunction with the cocycle $(\ga,\om) \mto \om$ yields the slash actions of weight~$k \in \frac{1}{2}\ZZ$ on functions~$f :\, \HS \ra \CC$:
\begin{gather}
\label{eq:slash-action-mp1r}
  f \big|_k\, (\ga,\om)
\,:=\,
  \det(\ga)^{\frac{k}{2}} \omega^{-2k} \cdot f \circ \ga
\tx{,}\quad
  (\ga,\om) 
\in
  \GMp{1}^+(\RR)
\tx{.}
\end{gather}

\paragraph{Modular forms}

Fix $k \in \frac{1}{2}\ZZ$, a finite index subgroup $\Ga \subseteq \Mp{1}(\ZZ)$, and a character~$\chi : \Ga \ra \CC^\times$ of finite order. A weakly holomorphic modular form of weight~$k$ for the character~$\chi$ on~$\Ga$ is a holomorphic function~$f :\, \HS \ra \CC$ satisfying the following two conditions:
\begin{enumerateroman}
\item For all $\ga \in \Ga$, we have $f \big|_k\, \ga = \chi(\ga) f$.
\item There exists $a \in \RR$ such that for all~$\ga \in \Mp{1}(\ZZ)$ there is~$b \in \RR$ with
\begin{gather*}
  \big| \big( f \big|_k\, \ga \big) (\tau) \big| < b \exp\big( -a \Im(\tau) \big)
  \tx{\ as\ }\tau \ra i \infty
\tx{.}
\end{gather*}
\end{enumerateroman}
We write~$\rmM^!_k(\Ga, \chi)$ for the space of weakly holomorphic modular forms of weight~$k$ for the character~$\chi$ on~$\Ga$. If~$\chi$ is trivial, we suppress it from our notation.

\paragraph{Group algebras}

Write $\CC[\Ga]$ for the group algebra of a group~$\Ga$, and let $\fraku_\ga \in \CC[\Ga]$ for $\ga \in \Ga$ denote its canonical units. Right-modules of $\CC[\Ga]$ are in canonical correspondence to complex right-representations of~$\Ga$. Given a right module~$V$ of~$\CC[\Ga]$, we write $\ker_{\Ga}(V)$ for the subgroup of~$\Ga$ that acts trivially on~$V$.

Let~$\Ga \subseteq \Mp{1}(\ZZ)$ be a finite index, normal subgroup. The spaces of weakly holomorphic modular forms for~$\Ga$ naturally carry the structure of a right-representation for~$\Mp{1}(\ZZ)$ via the weight-$k$ slash action.

\subsection{Abstract spaces of weakly holomorphic modular forms}

Let $V$ be a finite dimensional right-module for $\CC[\Mp{1}(\ZZ)]$ such that $\ker_{\Mp{1}(\ZZ)}(V) \subseteq \Mp{1}(\ZZ)$ has finite index. An abstract space of weakly holomorphic modular forms is a pair of such a module~$V$ and a homomorphism of~$\CC[\Mp{1}(\ZZ)]$-modules $\phi : V \ra \rmM^!_k(\ker_{\Mp{1}(\ZZ)}(V))$ for some~$k \in \frac{1}{2} \ZZ$. We say that the abstract space of weakly holomorphic modular forms~$(V,\phi)$ is realized in weight~$k$.

\paragraph{Induction of modular forms}

Given a modular form~$f \in \rmM^!_k(\Mp{1}(\ZZ), \chi)$ for a character~$\chi$, the identity map
\begin{gather*}
  \CC f \lra \rmM^!_k(\ker(\chi))
\tx{,}\;
  f \lmto f
\end{gather*}
yields an abstract space of modular forms. More generally, given $f \in \rmM^!_k(\Ga,\chi)$ for a finite index subgroup~$\Ga \subseteq \Mp{1}(\ZZ)$ and a character~$\chi$ of~$\Ga$, we obtain an abstract space of weakly holomorphic modular forms
\begin{gather}
\label{eq:def:induction}
  \Ind_\Ga\,f
\;:=\;
  \Big(
  \CC f \otimes_{\CC[\Ga]} \CC\big[\Mp{1}(\ZZ) \big],\,
  f \otimes \fraku_\ga \mto f \big|_k\,\ga
  \Big)
\tx{,}
\end{gather}
where the action of~$\CC[\Ga]$ on~$\CC f$ is given by the weight~$k$ slash action.

\paragraph{Fourier expansions}

Throughout this paper, we write $e(z)$ for $\exp(2 \pi i\, z)$, $z \in \CC$. Given a finite index subgroup~$\Ga \subseteq \Mp{1}(\ZZ)$ and a character~$\chi$ of~$\Ga$, there is a positive integer~$N$ such that $T^N \in \ker(\chi)$. In particular, a weakly holomorphic modular form~$f$ for the character~$\chi$ has a Fourier series expansion of the form
\begin{gather}
\label{eq:fourier-expansion}
  f(\tau)
\;=\;
  \sum_{n \in \frac{1}{N} \ZZ}
  c(f;\,n) e(n \tau)
\tx{.}
\end{gather}
For convenience, we set~$c(f;\,n) = 0$ if~$n \in \QQ \setminus \frac{1}{N}\ZZ$.

Given any ring~$R$, we write
\begin{gather}
  \rmFE(R)
\;:=\;
  R\big\llbrkt q^{\frac{1}{\infty}} \big\rrbrkt \big[q^{-1}\big]
\end{gather}
for the ring of Puiseux series with coefficients in~$R$. The Fourier expansion of weakly holomorphic modular forms yields a map
\begin{gather*}
  \rmfe :\,
  \rmM^!_k(\Ga, \chi)
\lra
  \rmFE(\CC)
\tx{,}\quad
  f
\lmto
  \rmfe(f)
:=
  \sum_{n \in \QQ} c(f;\,n) q^n
\tx{.}
\end{gather*}
\subsection{\texpdf{$\ell$}{l}-kernels}

We obtain a Fourier expansion map
\begin{gather*}
  \rmfe \circ \phi :\, V \lra \rmFE(\CC)
\end{gather*}
for any abstract space~$(V,\phi)$ of weakly holomorphic modular forms. Given any number field~$K$ with maximal order~$\cO_K$, write $\cO_{K,\ell} \subseteq K$ for the localization of~$\cO_K$ at an ideal~$\ell \subseteq \cO_K$. Fixing an embedding~$K \hra \CC$, we obtain an embedding~$\cO_{K,\ell} \hra \CC$. The $\ell$-kernel of~$(V,\phi)$ is defined as the preimage
\begin{gather}
  \ker_\ell\big( (V,\phi) \big)
\;:=\;
  \big( \rmfe \circ \phi \big)^{-1}\big( \ell \rmFE(\cO_{K,\ell}) \big)
\tx{.}
\end{gather}
Observe that we suppress~$K$ from our notation, but it is implicitly given by~$\ell$. We allow ourselves to identify a rational integer~$\ell$ with the ideal in~$\ZZ$ that it generates.

A priori, the $\ell$-kernel of an abstract space of modular forms is merely an~$\cO_{K,\ell}$-module. The key theorem of this paper is the next one, which guarantees that it is a module for a specific group algebra.
\begin{theorem}
\label{thm:ell-kernel-representation}
Let $K \subseteq \CC$ be a number field with fixed complex embedding, and let~$\ell$ be an ideal in~$\cO_K$. Let~$(V,\phi)$ be an abstract space of weakly holomorphic modular forms realized in weight~$k \in \frac{1}{2}\ZZ$. Assume that~$\ker_{\Mp{1}(\ZZ)}(V)$ is a congruence subgroup of level~$N$. Let $N_\ell$ be the smallest positive integer such that $\gcd(\ell, N \slash N_\ell) = 1$ if $k \in \ZZ$ and $\gcd(\ell, \lcm(4, N) \slash N_\ell) = 1$ if $k \in \frac{1}{2} + \ZZ$. Then the $\ell$-kernel of~$(V,\phi)$ is a right-module for~$\cO_{K,\ell}[\tdGa_0(N_\ell)]$.
\end{theorem}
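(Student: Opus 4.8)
The plan is to show that the $\cO_{K,\ell}$-module $M := \ker_\ell\big((V,\phi)\big)$, which we already know to be an $\cO_{K,\ell}$-module, is stable under right multiplication by $\fraku_\ga$ for every $\ga \in \tdGa_0(N_\ell)$; since $\tdGa_0(N_\ell)$ is a group and these units together with $\cO_{K,\ell}$ generate $\cO_{K,\ell}[\tdGa_0(N_\ell)]$, this suffices. So fix $v \in M$ and $\ga \in \tdGa_0(N_\ell)$, set $\Ga := \ker_{\Mp{1}(\ZZ)}(V)$, and put $g := \phi(v) \in \rmM^!_k(\Ga)$. Because $\phi$ is $\CC[\Mp{1}(\ZZ)]$-linear, $\phi(v \cdot \fraku_\ga) = g \big|_k\, \ga$, so the claim reduces to a statement about a single form: if $\rmfe(g) \in \ell\,\rmFE(\cO_{K,\ell})$, then $\rmfe(g \big|_k\, \ga) \in \ell\,\rmFE(\cO_{K,\ell})$.

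First I would set this up geometrically. Since $\Ga$ has level $N$, the form $g$ is a section, with at worst poles along the cusps, of the automorphic line bundle $\omega^{\otimes k}$ on the Deligne--Rapoport integral model of the modular curve of level $N$ over $\cO_{K,\ell}$; in half-integral weight one works instead on the metaplectic cover, which is why $\lcm(4,N)$ rather than $N$ governs the definition of $N_\ell$, and where Kubota's explicit cocycle is used to trivialize the theta automorphy factor. The Fourier expansion $\rmfe(g)$ is the evaluation of $g$ along the Tate curve $\mathrm{Tate}(q)$ equipped with its standard level structure at the cusp $\infty$, and the hypothesis says precisely that this expansion lies in $\ell\,\rmFE(\cO_{K,\ell})$.

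The core is a $q$-expansion principle combined with the component structure of the special fibre. Let $Z$ be the component of the special fibre over $\cO_K/\ell$ containing the reduction of the cusp $\infty$. Applied componentwise, the $q$-expansion principle (injectivity of the restriction of a section to the formal neighbourhood of a point on the integral curve $Z$, together with flatness over $\cO_{K,\ell}$) forces $g$ to be $\ell$-integral on $Z$ and to reduce to $0$ on all of $Z$; in particular the expansion of $g$ at every cusp lying on $Z$ has coefficients in $\ell\,\cO_{K,\ell}$. It then remains to observe that $\ga \in \tdGa_0(N_\ell)$ carries the cusp $\infty$ to a cusp again lying on $Z$. Via the Tate curve this holds because the component of $\infty$ is pinned down by the canonical connected subgroup $\mu_{N_\ell} \subseteq \mathrm{Tate}(q)[N_\ell]$ of the $N_\ell$-torsion, and the condition $c \equiv 0 \pmod{N_\ell}$ defining $\Ga_0(N_\ell)$ is exactly the requirement that the action of $\ga$ preserve this subgroup, hence fix $Z$. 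Consequently the expansion of $g$ at $\ga\,\infty$ — which, up to the automorphy factor attached to $\ga \in \Mp{1}(\ZZ)$, an $\ell$-adic unit, equals $\rmfe(g \big|_k\, \ga)$ — lies in $\ell\,\rmFE(\cO_{K,\ell})$, so $v \cdot \fraku_\ga \in M$.

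I expect the main obstacle to be the precise identification of $\tdGa_0(N_\ell)$ with the stabilizer of the component $Z$ of the cusp $\infty$, which rests on Deligne--Rapoport's description of the special fibre and the distribution of cusps among its components; the half-integral case adds the bookkeeping of the metaplectic cover and the verification that the Kubota--Weil cocycle reduces to an $\ell$-adic unit. One must also ensure the $q$-expansion principle applies to weakly holomorphic forms by working with $\omega^{\otimes k}$ twisted to allow poles along the cusp divisor, so that the vanishing of the expansion at $\infty$ kills any principal part over $Z$ as well.
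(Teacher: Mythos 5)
For integral weight your argument is essentially the paper's, just with the citation unpacked: the paper reduces to holomorphic forms (multiplying $\phi$ by a power of $\Delta$, where you instead twist $\omega^{\otimes k}$ by the cuspidal divisor --- both work) and then invokes Corollaire~VII.3.12 of Deligne--Rapoport~\cite{deligne-rapoport-1973}, whose proof is exactly the mechanism you sketch: divisibility of the $q$\nbd expansion at~$\infty$ propagates along the irreducible component~$Z$ of the special fibre containing the reduction of~$\infty$, and $\Ga_0(N_\ell)$ is the stabilizer of the line in~$(\ZZ\slash N_\ell\ZZ)^2$ corresponding to the connected subgroup~$\mu_{N_\ell}$ of the Tate curve, hence maps cusps on~$Z$ to cusps on~$Z$. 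Two caveats: rationality requires enlarging~$K$ by $N$\thdash\ roots of unity (the paper notes this via Constructions~VII.4.6--4.7), and for prime powers~$N_\ell = p^m$ with $m \ge 2$ the identification of irreducible components with lines rests on Drinfeld level structures; this is precisely the nontrivial content that the citation to~\cite{deligne-rapoport-1973} replaces, so your plan to re-derive it is more work than the paper's route but not wrong.

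The genuine gap is the half-integral weight case, which is part of the statement (it is why~$N_\ell$ is defined through~$\lcm(4,N)$). Your proposal to ``work on the metaplectic cover'' where ``Kubota's explicit cocycle is used to trivialize the theta automorphy factor'' is not a viable step: Deligne--Rapoport provide integral models and a $q$\nbd expansion principle for modular curves, not for metaplectic covers, and no off-the-shelf integral theory of such covers exists on which your component argument could run; Kubota's cocycle~\cite{kubota-1969} describes the group extension $\Mp{1}(\ZZ)$ of $\SL{2}(\ZZ)$ (and the paper does use it, but in the proofs of Propositions~\ref{prop:hecke-T-eigenvector-cartan-orbit} and~\ref{prop:hecke-T-eigenvector-p4-case}, not here). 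The missing idea is the paper's elementary reduction: tensor $(V,\phi)$ with the two-dimensional $\Mp{1}(\ZZ)$\nbd representation~$\Theta$ spanned by $\theta_0(\tau) = \sum_{n \in \ZZ} e(n^2\tau)$ and $\theta_1(\tau) = \sum_{n \in \frac{1}{2}+\ZZ} e(n^2\tau)$, using that $f|_k\,\ga$ has Fourier coefficients in $\ell\cO_{K,\ell}$ if and only if both $f\theta_0|_{k+\frac{1}{2}}\,\ga$ and $f\theta_1|_{k+\frac{1}{2}}\,\ga$ do; this converts weight $k \in \frac{1}{2}+\ZZ$ into integral weight at the cost of replacing~$N$ by~$\lcm(4,N)$, which is the actual source of that term in the statement. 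Without this (or an equivalent) reduction, your proof establishes the theorem only for $k \in \ZZ$.
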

\begin{remark}
Congruences of half-integral weight modular forms were also investigated by Jochnowitz in unpublished work~\cite{jochnowitz-2004-preprint}. The special case that~$N$ and~$\ell$ are co-prime can also be inferred from her results.
\end{remark}
\begin{proof}
Fix an element~$v \in \ker_\ell((V,\phi))$ and write~$f = \phi(v)$. We have to show that $\phi(v \ga) = f |_k\,\ga$ has Fourier coefficients in $\ell \cO_{K,\ell}$ for every~$\ga \in \tdGa_0(N_\ell)$. We let~$\Delta$ be the Ramanujan $\Delta$-function in $\rmM_{12}(\Mp{1}(\ZZ))$, and observe that~$f \Delta |_k\,\ga$ has Fourier coefficients in~$\ell \cO_{K,\ell}$ if and only if~$f |_k\,\ga$ does. In particular, we can assume that $f$ is a modular form after replacing $\phi$ with~$v \mto \Delta^h \phi(v)$ for sufficiently large~$h \in \ZZ$.

Consider the case $k \in \frac{1}{2} + \ZZ$. Let $\Theta$ be the $\CC$-vector space spanned by the theta series
\begin{gather*}
  \theta_0(\tau)
\;:=\;
  \sum_{n \in \ZZ} e(n^2 \tau)
\tx{,}\quad
  \theta_1(\tau)
\;:=\;
  \sum_{n \in \frac{1}{2} + \ZZ} e(n^2 \tau)
\tx{.}
\end{gather*}
Observe that~$\Theta$ is a representation for~$\Mp{1}(\ZZ)$ under the slash action of weight~$\frac{1}{2}$, which is isomorphic to the dual of the Weil representation associated with the qua\-dra\-tic form~$n \mto n^2$. In particular, we can and will view~$\Theta$ as an abstract space of modular forms. Then $\ker_{\Mp{1}(\ZZ)}(\Theta)$ has level~$4$. We reduce ourselves to the case of integral weight~$k$ at the expense of replacing~$N$ by~$\lcm(4,N)$ when replacing~$(V,\phi)$ with the tensor product $(V,\phi) \otimes \Theta$. Indeed, $f|_k\, \ga$ has Fourier coefficients in~$\ell \cO_{K,\ell}$ if and only if both $f \theta_0 |_{k+\frac{1}{2}}\,\ga$ and $f \theta_1 |_{k+\frac{1}{2}}\,\ga$ do.

In the remainder of the proof, we can and will assume that~$k \in \ZZ$. Since by our assumptions~$\ker_{\Mp{1}(\ZZ)}((V,\phi))$ has level~$N$, $f$ is a modular form for $\Ga(N) \subseteq \SL{2}(\ZZ)$. It therefore falls under Definition~VII.3.6 of modular forms in~\cite{deligne-rapoport-1973} if we suitably enlarge~$K$ by~$N$\thdash\ roots of unity. The transition between the two notions is outlined in Construction~VII.4.6 and~VII.4.7 of~\cite{deligne-rapoport-1973}. In particular, we can apply Corollaire~VII.3.12 to compare $\pi$-adic valuations of the Fourier expansion of~$f$ and~$f |_k\,\ga$ for every place~$\pi$ of~$K$ lying above~$\ell$. Using the notation by Deligne-Rapoport~\cite{deligne-rapoport-1973}, the condition on the image of their $g \in \SL{}(2, \ZZ \slash n)$ in the group~$\SL{}(2, \ZZ \slash p^m)$ translates into our condition that~$\gcd(\ell, N \slash N_\ell) = 1$.
\end{proof}

\subsection{Hecke operators}

Given a positive integer~$M$, we let
\begin{gather*}
  \GL{2}^{(M)}(\ZZ)
:=
  \big\{
  \ga = \begin{psmatrix} a & b \\ c & d \end{psmatrix} \in \Mat{2}(\ZZ) \,:\,
  \det(\ga) = M
  \big\}
\tx{,}
\end{gather*}
and correspondingly write $\GMp{1}^{(M)}(\ZZ)$ for its preimage in~$\GMp{1}(\RR)$ under the projection to~$\GL{2}(\RR)$. Let $\CC[ \GMp{1}^{(M)}(\ZZ) ]$ be the $\CC[\Mp{1}(\ZZ)]$-bi-module with $\CC$-basis~$\fraku_\ga$, $\ga \in \GMp{1}^{(M)}(\ZZ)$. Given an abstract space of weakly holomorphic modular forms~$(V,\phi)$, we define the application of the $M$\thdash\ Hecke operator by
\begin{gather}
\label{eq:def:hecke-operator-abstract-space-of-modular-forms}
  \rmT_M\,(V,\phi)
\;:=\;
  \Big(
  V \otimes_{\CC[\Mp{1}(\ZZ)]} \CC\big[ \GMp{1}^{(M)}(\ZZ) \big],\;
  v \otimes \fraku_\ga \mto \phi(v) \big|_k\,\ga
  \Big)
\tx{.}
\end{gather}
It is straightforward to verify that~$\rmT_M\,(V,\phi)$ is an abstract space of weakly holomorphic modular forms. By slight abuse of notation, we write $\rmT_M\,(V,\phi) = (\rmT_M\,V,\rmT_M\,\phi)$.

\subsection{Congruences of modular forms on arithmetic progressions}

Throughout, we will identify upper triangular matrices~$\ga \in \GL{2}^+(\RR)$ with~$(\ga, \tau \mto \sqrt{\det(\ga)})$. Fix an abstract space of weakly holomorphic modular forms~$(V,\phi)$ and an eigenvector~$v \in V$ for~$T \in \Mp{1}(\ZZ)$. Choose~$\alpha \in \QQ$ such that~$v T = e(\alpha) v$. Given a positive integer~$M$ and an integer~$b$, we let
\begin{gather}
\label{eq:def:hecke-T-eigenvectors}
  v_T(M,b;\alpha)
\;:=\;
  v \otimes
  \sum_{h \,\pmod{M}}
  e\big( - \tfrac{(b+\alpha)h}{M} \big)\,
  \fraku_{\begin{psmatrix} 1 & h \\ 0 & M \end{psmatrix}}
  \,\in\,
  \rmT_M\,V
\tx{.}
\end{gather}
A brief verification shows that the sum over~$h \,\pmod{M}$ is well-defined and that we have
\begin{gather}
  v_T(M,b;\alpha) T
\;=\;
  e\big(\tfrac{b+\alpha}{M}\big) v_T(M,b;\alpha)
\tx{.}
\end{gather}

The image of~$v_T(M,b;\alpha)$ under~$\rmT_M\,\phi$ captures the Fourier coefficients of~$\phi(v)$ on the arithmetic progression~$M \ZZ + b + \alpha$. Specifically, we have the next proposition:
\begin{proposition}
\label{prop:hecke-T-eigenvectors-fourier-expansion}	
Given a positive integer~$M$, an integer~$b$, an abstract space of weakly holomorphic modular forms~$(V,\phi)$, and a $T$-eigenvector~$v \in V$ with eigenvalue~$e(\alpha)$, $\alpha \in \QQ$, we have
\begin{gather}
\label{eq:prop:hecke-T-eigenvectors-fourier-expansion}
  (\rmT_M\phi) \big( v_T(M,b;\alpha) \big)
\;=\;
  M^{1-\frac{k}{2}}\,
  \sum_{\substack{n \in \alpha + \ZZ\\n-\alpha \equiv b \,\pmod{M}}}
  c(f;\,n)
  e\big( \tfrac{n}{M}\, \tau\big)
\tx{.}
\end{gather}
\end{proposition}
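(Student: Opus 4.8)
The plan is a direct computation that unwinds the definition~\eqref{eq:def:hecke-operator-abstract-space-of-modular-forms} of the Hecke operator together with the slash action~\eqref{eq:slash-action-mp1r}. First I would evaluate $\rmT_M\phi$ on each of the summands defining~$v_T(M,b;\alpha)$ in~\eqref{eq:def:hecke-T-eigenvectors}. Writing~$f = \phi(v)$ and using the stipulated identification of the upper-triangular matrix $\begin{psmatrix} 1 & h \\ 0 & M \end{psmatrix}$ with the metaplectic element $(\begin{psmatrix} 1 & h \\ 0 & M \end{psmatrix}, \tau \mto \sqrt{M})$, the factors $\det(\ga)^{k/2} = M^{k/2}$ and $\omega^{-2k} = M^{-k}$ combine to $M^{-k/2}$, so that
\begin{gather*}
  (\rmT_M \phi)\big( v \otimes \fraku_{\begin{psmatrix} 1 & h \\ 0 & M \end{psmatrix}} \big)
  \;=\;
  f \big|_k\, \begin{psmatrix} 1 & h \\ 0 & M \end{psmatrix}
  \;=\;
  M^{-\frac{k}{2}}\, f\big( \tfrac{\tau + h}{M} \big)
\tx{.}
\end{gather*}

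Next I would identify the Fourier support of~$f$. Since~$\phi$ is a homomorphism of $\CC[\Mp{1}(\ZZ)]$-modules and~$v T = e(\alpha) v$, applying~$\phi$ gives $f \big|_k\, T = e(\alpha) f$; because $T = (\begin{psmatrix} 1 & 1 \\ 0 & 1 \end{psmatrix}, \tau \mto 1)$ acts by $f \big|_k\, T (\tau) = f(\tau + 1)$, comparing Fourier expansions forces $c(f;\,n) = 0$ unless $n \in \alpha + \ZZ$. Substituting the expansion~\eqref{eq:fourier-expansion}, now supported on $\alpha + \ZZ$, into the previous display and summing over~$h \,\pmod{M}$ against the weights $e(-(b+\alpha)h / M)$, I would interchange the two summations and isolate the inner character sum $\sum_{h \,\pmod{M}} e\big( (n - \alpha - b) h / M \big)$. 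For $n \in \alpha + \ZZ$ the exponent $n - \alpha - b$ is an integer, so orthogonality of additive characters modulo~$M$ makes this sum equal to~$M$ when $n - \alpha \equiv b \pmod{M}$ and~$0$ otherwise. Collecting the prefactors $M^{-k/2} \cdot M = M^{1 - k/2}$ then yields exactly~\eqref{eq:prop:hecke-T-eigenvectors-fourier-expansion}.

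The computation itself is routine, and the only points requiring care are bookkeeping rather than the substance. The interchange of the finite sum over~$h$ with the Fourier series is legitimate because for a weakly holomorphic form the latter has only finitely many terms with $n < 0$ and converges absolutely for $\Im(\tau)$ large, where the manipulation is an identity of Puiseux series. The one genuinely delicate ingredient is the correct tracking of the metaplectic automorphy factor~$\omega^{-2k}$ for half-integral~$k$; this is precisely what the fixed identification of upper-triangular matrices with $(\ga, \tau \mto \sqrt{\det(\ga)})$ is designed to make unambiguous, and once that convention is in force the factor contributes only the scalar~$M^{-k/2}$ above. It also remains to recall the well-definedness of the sum~\eqref{eq:def:hecke-T-eigenvectors} over~$h \,\pmod{M}$ noted there, which guarantees that the expression being manipulated is independent of the choice of coset representatives~$h$.
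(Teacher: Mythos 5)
Your proposal is correct and follows essentially the same route as the paper's proof: compute $f \big|_k \begin{psmatrix} 1 & h \\ 0 & M \end{psmatrix}$ termwise on the Fourier expansion (picking up the factor $M^{-k/2}$ from the fixed metaplectic convention), use $vT = e(\alpha)v$ to restrict the support of $c(f;\,n)$ to $\alpha + \ZZ$, and then apply orthogonality of additive characters modulo $M$ to isolate the progression $n - \alpha \equiv b \pmod{M}$. The extra care you take with the interchange of summation and the well-definedness of the sum over $h \pmod{M}$ is sound, though the paper treats both as immediate.
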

\begin{proof}
The definition of the slash action in~\eqref{eq:slash-action-mp1r} yields
\begin{gather*}
  e(n \tau) \big|_k\, \begin{psmatrix} 1 & h \\ 0 & M \end{psmatrix}
\;=\;
  M^{-\frac{k}{2}}\,
  e\big( \tfrac{h}{M} n \big)\,
  e\big( \tfrac{n}{M}\, \tau\big)
\tx{.}
\end{gather*}
Let $f = \phi(v)$, and observe that $v T = e(\alpha) v$ implies that $c(f;\,n)$ is supported on~$n \in \alpha + \ZZ$. When inserting the definition of~$v_T(M,b;\alpha)$ and the Fourier expansion of~$f$, we find that
\begin{align*}
  (\rmT_M\phi) \big( v_T(M,b;\alpha) \big)
\;&=\;
  \sum_{h \,\pmod{M}}
  e\big( - \tfrac{(b+\alpha)h}{M} \big)\,
  f \big|_k\, \big( \begin{psmatrix} 1 & h \\ 0 & M \end{psmatrix},\, \tau \mto \sqrt{M} \big)
\\
&=\;
  M^{-\frac{k}{2}}\,
  \sum_{n \in \alpha + \ZZ}
  c(f;\,n)
  e\big( \tfrac{n}{M}\, \tau\big)
  \sum_{h \,\pmod{M}}
  e\big( \tfrac{h(n - \alpha - b)}{M} \big)
\\
&=\;
  M^{1-\frac{k}{2}}\,
  \sum_{\substack{n \in \alpha + \ZZ\\n-\alpha \equiv b \,\pmod{M}}}
  c(f;\,n)
  e\big( \tfrac{n}{M}\, \tau\big)
\tx{.}
\end{align*}
\end{proof}

\begin{corollary}
\label{cor:congruence-on-arithmetic-progression-hecke-T-eigenvector}
Let~$(V,\phi)$ be an abstract space of weakly holomorphic modular forms, $K \subseteq \CC$ a number field and~$\ell$ an ideal in~$\cO_K$. Fix a~$T$-eigenvector~$v \in V$ with eigenvalue~$e(\alpha)$, $\alpha \in \QQ$, and write $f = \phi(v)$. Given a positive integer~$M$ and an integer~$b$, the congruences
\begin{gather*}
\label{eq:cor:congruence-on-arithmetic-progression-hecke-T-eigenvector:arithmetic-progression}
  \forall n \in \ZZ :\,
  c(f;\,M n + b + \alpha) \equiv 0 \;\pmod{\ell}
\end{gather*}
are equivalent to
\begin{gather*}
\label{eq:cor:congruence-on-arithmetic-progression-hecke-T-eigenvector:hecke}
  v_T(M,b;\alpha)
\in
  \ker_\ell\big( \rmT_M (V,\phi) \big)
\tx{.}
\end{gather*}
\end{corollary}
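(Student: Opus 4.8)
The plan is to unwind the definition of the $\ell$-kernel and to read the equivalence off Proposition~\ref{prop:hecke-T-eigenvectors-fourier-expansion}. By the definition of the $\ell$-kernel applied to the abstract space $\rmT_M(V,\phi) = (\rmT_M V,\, \rmT_M\phi)$, the membership $v_T(M,b;\alpha) \in \ker_\ell\big(\rmT_M(V,\phi)\big)$ means exactly that $(\rmfe \circ \rmT_M\phi)\big(v_T(M,b;\alpha)\big) \in \ell\,\rmFE(\cO_{K,\ell})$, that is, that every Fourier coefficient of the modular form $(\rmT_M\phi)\big(v_T(M,b;\alpha)\big)$ lies in $\ell\,\cO_{K,\ell}$.

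I would then substitute the explicit expansion supplied by Proposition~\ref{prop:hecke-T-eigenvectors-fourier-expansion}, which identifies the coefficient of $q^{n/M}$ in $(\rmT_M\phi)\big(v_T(M,b;\alpha)\big)$ as $M^{1-\frac{k}{2}}\,c(f;\,n)$, the index $n$ running over those elements of $\alpha + \ZZ$ with $n - \alpha \equiv b \pmod{M}$. The only bookkeeping is to observe that this index set is precisely $\{\,Mn + b + \alpha : n \in \ZZ\,\}$: setting $n' = n - \alpha \in \ZZ$, the condition $n' \equiv b \pmod M$ says $n' = Mn'' + b$, so that $n = Mn'' + b + \alpha$. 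Hence the $\ell$-kernel condition becomes $M^{1-\frac{k}{2}}\,c(f;\,Mn + b + \alpha) \in \ell\,\cO_{K,\ell}$ for all $n \in \ZZ$, whereas the congruences we wish to match read $c(f;\,Mn + b + \alpha) \in \ell\,\cO_{K,\ell}$ for all $n \in \ZZ$.

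What remains, and where the only genuine care is needed, is to strip off the global prefactor $M^{1-\frac{k}{2}}$. Since it is a single nonzero scalar independent of $n$, the two conditions coincide as soon as multiplication by $M^{1-\frac{k}{2}}$ preserves the ideal $\ell\,\cO_{K,\ell}$ in both directions, that is, as soon as $M^{1-\frac{k}{2}}$ is a unit of $\cO_{K,\ell}$. I would establish this by enlarging $K$, if needed, to contain the relevant root of $M$ forced by the possibly half-integral weight---at most a fourth root, since $1 - \tfrac{k}{2} \in \tfrac{1}{4}\ZZ$---and then noting that every prime above $\ell$ in the enlarged field is coprime to $M$, so that $M$ and all of its roots are units in the localization. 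Multiplication by such a unit fixes $\ell\,\cO_{K,\ell}$ setwise, and the equivalence follows. I expect this normalizing factor to be the main obstacle: its exponent $1 - \tfrac{k}{2}$ is fractional for odd integral and for half-integral $k$, so one must keep track of the root of $M$ it introduces and verify that it does not interact with $\ell$, which is precisely the coprimality of $M$ and $\ell$ underlying the clean statement.
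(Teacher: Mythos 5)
Your proposal takes exactly the route the paper intends: the paper states Corollary~\ref{cor:congruence-on-arithmetic-progression-hecke-T-eigenvector} with no proof at all, treating it as an immediate consequence of Proposition~\ref{prop:hecke-T-eigenvectors-fourier-expansion}, and your unwinding of the definition of $\ker_\ell$, together with the identification of the index set $\{Mn+b+\alpha : n \in \ZZ\}$, is precisely that argument. You are also right that the entire content of the deduction sits in the prefactor $M^{1-\frac{k}{2}}$, a point the paper passes over in silence.

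The gap is in how you dispose of that prefactor. Your claim that ``every prime above $\ell$ in the enlarged field is coprime to $M$'' is not a consequence of anything in the statement: the corollary imposes no relation between $M$ and $\ell$, so this step amounts to assuming $\gcd(M,\ell)=1$. Without that assumption the factor $M^{1-\frac{k}{2}}$ is not a unit of $\cO_{K,\ell}$ and the asserted equivalence actually fails in one direction. Concretely, take $k=0$, $f=1 \in \rmM^!_0(\Mp{1}(\ZZ))$, $V=\CC f$ with $\phi$ the identity, $K=\QQ$, $\ell=(5)$, $M=5$, $b=0$, $\alpha=0$: then $(\rmT_M\phi)\big(v_T(5,0;0)\big)=5$, whose Fourier coefficients all lie in $\ell\,\cO_{K,\ell}$, so $v_T(5,0;0)\in\ker_\ell\big(\rmT_M(V,\phi)\big)$, and yet $c(f;0)=1\not\equiv 0 \pmod{5}$. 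So your proof is complete exactly under the extra hypothesis $\gcd(M,\ell)=1$ (plus the enlargement of $K$ you describe), and cannot be completed without it, because the statement as printed is false in the non-coprime case. To be fair, this defect is inherited from the paper: the hypothesis is satisfied where the corollary feeds into Theorem~\ref{mainthm:atkin-obrien} (there $M=A$ is coprime to $6\ell$), but not in the application with $M \leadsto Q^4\ell$ inside the proof of Theorem~\ref{mainthm:abundance}, where the weight is $-\frac{1}{2}$ and the non-unit factor $M^{\frac{5}{4}}$ costs a power of $\ell$. The clean repair, which you should state explicitly, is either to add $\gcd(M,\ell)=1$ to the hypotheses or to renormalize the Hecke operator in~\eqref{eq:def:hecke-operator-abstract-space-of-modular-forms} by $M^{\frac{k}{2}-1}$ so that no prefactor appears in Proposition~\ref{prop:hecke-T-eigenvectors-fourier-expansion} at all.
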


\section{The interplay of congruences}

In this section, we derive from Theorem~\ref{thm:ell-kernel-representation} and Corollary~\ref{cor:congruence-on-arithmetic-progression-hecke-T-eigenvector} implications for general weakly holomorphic modular forms, tailor-made to establish Theorems~\ref{mainthm:atkin-obrien} and~\ref{mainthm:abundance} in Section~\ref{sec:proof-of-main-theorems}. In this section~$p$ does \emph{not} denote the partition function, but a prime.

We will employ Kubota's description of~$\GMp{1}(\QQ)$ in~\cite{kubota-1969}.
In order to distinguish it from our previous description of~$\GMp{1}(\QQ)$, we denote it by~$\GMp{1}'(\QQ)$. As a set, we have~$\GMp{1}'(\QQ) = \GL{2}(\QQ) \times \mu_2$, $\mu_2 = \{ \pm 1 \}$, which yields a section (of sets)~$\GL{2}(\QQ) \hra \GMp{1}(\QQ)$. In this situation, multiplication in~$\Mp{1}(\QQ)$ is given by
\begin{gather*}
  (\ga,\omega) \cdot (\ga', \omega')
\;=\;
  \big( \ga \ga', \omega + \omega' + \psi_b(\ga, \ga') \big)
\tx{,}
\end{gather*}
where $\psi_b$ arises from Kubota's cocycle~$b$ defined in~(21) on page~22 of~\cite{kubota-1969}. Specifically, $\psi_b$ splits into a product of local contributions~$\prod \psi_{b,p}$, where~$p$ runs through all prime numbers. Theorem~2 of~\cite{kubota-1969} implies that~$\psi_b$ is trivial on~$\Ga(4)$. In particular, we obtain a section of groups~$\Ga(4) \hra \Mp{1}'(\ZZ)$. Finally, we record two further consequences of Kubota's Theorem~2: First, any other section~$\Ga(4) \hra \Mp{1}'(\ZZ)$ differs from Kubota's by a homomorphism~$\Ga(4) \ra \mu_2$. Second, $N$ is even if there is a section~$\Ga(N) \hra \Mp{1}'(\ZZ)$.

\begin{proposition}
\label{prop:hecke-T-eigenvector-cartan-orbit}
Let~$M$ and~$N$ be co-prime positive integers, let $(V,\phi)$ be an abstract space of modular forms, and assume that\/~$\ker_{\Mp{1}(\ZZ)}(V)$ is a congruence subgroup of level~$N$. Let $W \subseteq \rmT_M\,V$ be a subrepresentation for~$\tdGa_0(M) \cap \tdGa(N)$. Given an eigenvector~$v \in V$ for~$T \in \Mp{1}(\ZZ)$ with eigenvalue~$e(\alpha)$,~$\alpha \in \QQ$, assume that $v_T(M, b; \alpha) \in W$ for some integer~$b \in \ZZ$. Then for every~$u \in \ZZ$ co-prime to~$M$ and congruent to~$1 \pmod{N}$, we have $v_T(M, u^2(b + \alpha) - \alpha; \alpha) \in W$.
\end{proposition}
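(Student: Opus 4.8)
The plan is to realize the passage from $v_T(M,b;\alpha)$ to $v_T\big(M,u^2(b+\alpha)-\alpha;\alpha\big)$ as the right action of a single, carefully chosen element $\ga \in \tdGa_0(M) \cap \tdGa(N)$, up to a nonzero scalar. Since $W$ is a subrepresentation for $\tdGa_0(M) \cap \tdGa(N)$ and contains $v_T(M,b;\alpha)$ by hypothesis, such an identity immediately places $v_T\big(M,u^2(b+\alpha)-\alpha;\alpha\big)$ in $W$ as well.

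First I would construct $\ga$. Writing $\ga = \begin{psmatrix} a & \beta \\ c & d \end{psmatrix}$, I ask for $M \isdiv c$, for $\ga \equiv \bbone \pmod{N}$, and for $d \equiv u^{-1} \pmod{M}$. Because $\gcd(M,N) = 1$, the reduction $\SL{2}(\ZZ) \ra \SL{2}(\ZZ \slash M) \times \SL{2}(\ZZ \slash N)$ is surjective, so I may prescribe the image of $\ga$ to be $\big( \diag(u,u^{-1}), \bbone \big)$ and lift; any lift has the three required properties. The assumption $u \equiv 1 \pmod{N}$ enters twice: it renders these congruences consistent, and, since $N\alpha \in \ZZ$, it yields $(u^2-1)\alpha \in \ZZ$, so that $u^2(b+\alpha)-\alpha = u^2 b + (u^2-1)\alpha \in \ZZ$ and the target vector is well defined.

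Next I would evaluate the action on the defining sum~\eqref{eq:def:hecke-T-eigenvectors}. With $g_h := \begin{psmatrix} 1 & h \\ 0 & M \end{psmatrix}$ one has $v_T(M,b;\alpha)\,\ga = v \otimes \sum_{h \pmod{M}} e\big( -\tfrac{(b+\alpha)h}{M} \big)\, \fraku_{g_h \ga}$, and the core step is the factorization
\begin{gather*}
  g_h\, \ga \;=\; \gamma''_h\, g_{h'}
\tx{,}\quad
  \gamma''_h \in \Mp{1}(\ZZ)
\tx{,}\quad
  h' \equiv d^2 h + d\beta \pmod{M}
\tx{.}
\end{gather*}
Solving the matrix equation produces $\gamma''_h = \begin{psmatrix} a + hc & b''_h \\ Mc & d - c h' \end{psmatrix}$ of determinant one. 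Using $\gcd(M,N)=1$ I would pin down the integer $h'$ by additionally requiring $h' \equiv h \pmod{N}$; then reducing $\gamma''_h$ modulo $N$ and invoking $\ga \equiv \bbone \pmod{N}$ shows $\gamma''_h \in \Ga(N)$, hence $\gamma''_h \in \ker_{\Mp{1}(\ZZ)}(V)$ because this kernel has level $N$. Applying the tensor relation over $\CC[\Mp{1}(\ZZ)]$ and $v\,\gamma''_h = v$ collapses each summand to $v \otimes \fraku_{g_{h'}}$. As $\gcd(d,M)=1$, the assignment $h \mapsto h'$ is a bijection modulo $M$; reindexing by $h'$ and splitting off the $h'$-independent phase $e\big( \tfrac{(b+\alpha) d^{-1} \beta}{M} \big)$ identifies the outcome with that nonzero scalar times $v_T(M,b^*;\alpha)$, where $b^* + \alpha \equiv d^{-2}(b+\alpha) \equiv u^2(b+\alpha) \pmod{M}$, as required.

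The step I expect to be the main obstacle is that all of this must be carried out in $\Mp{1}(\ZZ)$ rather than $\SL{2}(\ZZ)$: the factorization $g_h \ga = \gamma''_h g_{h'}$ and the relation $v\,\gamma''_h = v$ have to hold for the genuine metaplectic lifts, and a priori the $\mu_2$-component of $\gamma''_h = g_h\, \ga\, g_{h'}^{-1}$ could vary with $h$, which would wreck the reassembly of the sum. For $k \in \ZZ$ there is nothing to check. For $k \in \tfrac12 + \ZZ$ I would control the sign through Kubota's explicit description of $\GMp{1}(\QQ)$~\cite{kubota-1969}: the correction cocycle splits into local contributions and is trivial on~$\Ga(4)$, which both identifies $\gamma''_h$ with the lift lying in $\ker_{\Mp{1}(\ZZ)}(V)$ and forces the residual sign to be independent of $h$, so that it is absorbed into the single global scalar above. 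Verifying this $h$-independence is the delicate point; the rest is the coset bookkeeping already encountered in Proposition~\ref{prop:hecke-T-eigenvectors-fourier-expansion}.
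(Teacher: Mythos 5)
Your construction coincides with the paper's own first half: the paper likewise picks $\ga \in \tdGa_0(M) \cap \tdGa(N)$ whose image in $\SL{2}(\ZZ)$ is congruent to $\begin{psmatrix} u & 0 \\ 0 & \ov{u} \end{psmatrix}$ (modulo $MN$ there), factors $\begin{psmatrix} 1 & h \\ 0 & M \end{psmatrix}\ga$ as an element lying over $\Ga(N)$ times $\begin{psmatrix} 1 & \ov{u}^2 h \\ 0 & M \end{psmatrix}$, collapses the tensor, and reindexes. The problem is that the step you explicitly defer is not a residual technicality but the substance of the proposition. Since $\ker_{\Mp{1}(\ZZ)}(V)$ is a congruence subgroup of the \emph{metaplectic} group of level~$N$, it is only guaranteed to contain, for each matrix of $\Ga(N)$, one of its two lifts; hence your $\gamma''_h$ (the paper's $\omega_{h,u}$) acts on $v$ by a sign, and the whole proof hinges on that sign being independent of $h$. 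Moreover, your proposed mechanism for controlling it is incorrect as stated: triviality of Kubota's cocycle on $\Ga(4)$ does \emph{not} "identify $\gamma''_h$ with the lift lying in $\ker_{\Mp{1}(\ZZ)}(V)$", because, as the paper records as a consequence of Kubota's Theorem~2, two sections $\Ga(4) \hra \Mp{1}'(\ZZ)$ may differ by a nontrivial homomorphism $\Ga(4) \ra \mu_2$, so Kubota's section need not land in the kernel at all. For exactly this reason the paper proves only the weaker, sufficient statement that the class of $\omega_{h,u}$ in $\tdGa(N)\slash\big(\tdGa(N) \cap \ker_{\Mp{1}(\ZZ)}(V)\big)$ is independent of $h$ (a constant sign then factors out of the sum), and it establishes this by a computation your proposal never performs: normalize $8 \isdiv N$ when $N$ is even, fix the $\mu_2$-components of all lifts via $\begin{psmatrix} 1 & h \\ 0 & M \end{psmatrix} = \begin{psmatrix} 1 & 0 \\ 0 & M \end{psmatrix} T^h$, adjust $\ga$ so that its lower-left entry $c$ is nonzero, and evaluate Kubota's local cocycles $\psi_{b,p}$ prime by prime as products of Hilbert symbols, obtaining $(1,c)_p(-c,c)_p(1,1)_p = 1$ and $(c \slash M, 1)_p(-M \slash c, c \slash M)_p = 1$. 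Without this verification (or an equivalent one) there is no proof.

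Two smaller gaps also need repair. First, your assertion that $N\alpha \in \ZZ$ does not follow from the hypotheses: level $N$ only places \emph{some} lift of $T^N$ in the kernel, which yields $e(N\alpha) = \pm 1$, i.e.\ $2N\alpha \in \ZZ$. This half-integer ambiguity affects both the well-definedness of $v_T(M, u^2(b+\alpha)-\alpha; \alpha)$ and your phase matching $b^* + \alpha \equiv u^2(b+\alpha) \pmod{M}$, where a discrepancy of $\tfrac{M}{2}$ produces an $h$-dependent factor $(-1)^h$ that would wreck the reassembly; this is precisely why the paper opens by replacing $N$ so that $8 \isdiv N$ when $N$ is even (for odd $N$ the issue disappears, since a section $\Ga(N) \hra \Mp{1}'(\ZZ)$ would force $N$ even, so the center then lies in the kernel). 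Second, "for $k \in \ZZ$ there is nothing to check" is unwarranted at the proposition's level of generality: the statement concerns the module $V$, on which the center of $\Mp{1}(\ZZ)$ may act nontrivially regardless of the weight in which $(V,\phi)$ is realized, since $\phi$ is not assumed injective; the paper's argument is accordingly uniform in $k$.
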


\begin{corollary}
\label{cor:congruences-on-square-classes}
Fix a number field $K \subseteq \CC$ and an ideal~$\ell \subseteq \cO_K$. Let $f \in \rmM_k(\tdGa(N))$ be a modular form with Fourier coefficients~$c(f;\,n) \in \cO_{K,\ell}$ supported on~$n \in \alpha + \ZZ$ for some~$\alpha \in \frac{1}{N}\ZZ$. Assume that~$f$ satisfies the congruences
\begin{gather*}
  \forall n \in \ZZ \,:\, c(f;\,M n + b + \alpha) \equiv 0 \;\pmod{\ell}
\end{gather*}
for some positive integer~$M$ that is co-prime to~$N$ and some~$b \in \ZZ$. Then
\begin{gather*}
  \forall n \in \ZZ \,:\, c(f;\,M n + b' + \alpha) \equiv 0 \;\pmod{\ell}
\end{gather*}
for all $b' \in \ZZ$ with $b' + \alpha \equiv u^2 (b + \alpha)$ for some~$u \in \ZZ$ that is co-prime to~$M$.
\end{corollary}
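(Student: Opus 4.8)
The plan is to package $f$ into an abstract space of weakly holomorphic modular forms, transport both the hypothesis and the sought conclusion into membership statements for the $\ell$-kernel of its $M$\thdash\ Hecke module via Corollary~\ref{cor:congruence-on-arithmetic-progression-hecke-T-eigenvector}, and then bridge the two with Proposition~\ref{prop:hecke-T-eigenvector-cartan-orbit}. First I would set $V := \lspan_\CC\{ f |_k\,\ga : \ga \in \Mp{1}(\ZZ)\}$ with $\phi$ the inclusion into $\rmM^!_k(\ker_{\Mp{1}(\ZZ)}(V))$. Since $\tdGa(N)$ is normal in $\Mp{1}(\ZZ)$ and $f$ is $\tdGa(N)$-invariant, each $f |_k\,\ga$ is again $\tdGa(N)$-invariant, so $(V,\phi)$ is an abstract space realized in weight~$k$ with $\ker_{\Mp{1}(\ZZ)}(V) \supseteq \tdGa(N)$; hence this kernel is a congruence subgroup of level dividing~$N$ and coprime to~$M$. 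The assumption that $c(f;\,n)$ is supported on $n \in \alpha + \ZZ$ is exactly the statement that $v := f$ satisfies $v T = e(\alpha) v$, so $v$ provides the $T$-eigenvector needed throughout. By Corollary~\ref{cor:congruence-on-arithmetic-progression-hecke-T-eigenvector}, the hypothesis is equivalent to $v_T(M,b;\alpha) \in \ker_\ell(\rmT_M(V,\phi))$, and the conclusion for a given $b'$ is equivalent to $v_T(M,b';\alpha) \in \ker_\ell(\rmT_M(V,\phi))$.

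The decisive step is to recognize $W := \ker_\ell(\rmT_M(V,\phi))$ as a subrepresentation for $\tdGa_0(M) \cap \tdGa(N)$, so that Proposition~\ref{prop:hecke-T-eigenvector-cartan-orbit} applies. I would derive this from Theorem~\ref{thm:ell-kernel-representation} applied to the abstract space $\rmT_M(V,\phi)$, which first requires controlling the level of $\rmT_M V$. Conjugating the principal congruence subgroup by the upper-triangular determinant-$M$ coset representatives $\begin{psmatrix} a & b \\ 0 & d \end{psmatrix}$ of $\Mp{1}(\ZZ) \backslash \GMp{1}^{(M)}(\ZZ)$, one checks that $\tdGa(MN) \subseteq \ker_{\Mp{1}(\ZZ)}(\rmT_M V)$, so the latter is a congruence subgroup of level $L \isdiv MN$. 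Theorem~\ref{thm:ell-kernel-representation} then makes $W$ a module for $\cO_{K,\ell}[\tdGa_0(L_\ell)]$, and since $L_\ell \isdiv L \isdiv MN$ every element of $\tdGa_0(M) \cap \tdGa(N)$—whose lower-left entry is divisible by $\lcm(M,N) = MN$—lies in $\tdGa_0(L_\ell)$. Thus $W$ is stable under $\tdGa_0(M) \cap \tdGa(N)$. I expect this level bookkeeping, together with verifying the Deligne-Rapoport hypotheses of Theorem~\ref{thm:ell-kernel-representation} for the Hecke-twisted space $\rmT_M(V,\phi)$, to be the main obstacle; the coprimality $\gcd(M,N) = 1$ is precisely what keeps the lower-left divisibility conditions aligned.

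With $v_T(M,b;\alpha) \in W$ in hand, Proposition~\ref{prop:hecke-T-eigenvector-cartan-orbit} yields $v_T(M, u^2(b+\alpha) - \alpha; \alpha) \in W$ for every $u \in \ZZ$ coprime to $M$ and congruent to $1 \pmod{N}$ (replacing $N$ by the actual level of $\ker_{\Mp{1}(\ZZ)}(V)$ if it is a proper divisor). To reach an arbitrary $u$ coprime to $M$, with no condition modulo~$N$, I would invoke the Chinese remainder theorem: since $\gcd(M,N) = 1$, choose $\tilde u \equiv u \pmod{M}$ and $\tilde u \equiv 1 \pmod{N}$, which is again coprime to $M$. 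Because $v_T(M,\cdot\,;\alpha)$ depends on its second argument only modulo $M$ and $\tilde u^2 \equiv u^2 \pmod{M}$, we obtain $v_T(M, b'; \alpha) = v_T(M, \tilde u^2(b+\alpha) - \alpha; \alpha) \in W$ for any $b' \in \ZZ$ with $b' + \alpha \equiv u^2(b+\alpha) \pmod{M}$. A final application of Corollary~\ref{cor:congruence-on-arithmetic-progression-hecke-T-eigenvector} translates this membership back into $c(f;\,M n + b' + \alpha) \equiv 0 \pmod{\ell}$ for all $n \in \ZZ$, which is the assertion.
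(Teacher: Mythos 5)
Your proposal follows the paper's route: the paper's own proof is a single sentence --- form the induced abstract space of \eqref{eq:def:induction} and combine Corollary~\ref{cor:congruence-on-arithmetic-progression-hecke-T-eigenvector} with Proposition~\ref{prop:hecke-T-eigenvector-cartan-orbit} --- and your write-up supplies exactly the bookkeeping that this one-liner leaves implicit. Your span-of-translates model of~$V$ is interchangeable with the tensor-product induction, and the two details you add are the right ones: the level computation $\tdGa(MN) \subseteq \ker_{\Mp{1}(\ZZ)}(\rmT_M\,V)$ (conjugating $\tdGa(MN)$ by the upper-triangular determinant-$M$ representatives lands in $\tdGa(N)$, which acts trivially on~$V$ by normality), so that Theorem~\ref{thm:ell-kernel-representation} makes $W = \ker_\ell(\rmT_M(V,\phi))$ stable under $\tdGa_0(L_\ell) \supseteq \tdGa_0(M)\cap\tdGa(N)$; and the reduction, via the Chinese remainder theorem, from arbitrary~$u$ co-prime to~$M$ to the~$u \equiv 1 \pmod{N}$ required by Proposition~\ref{prop:hecke-T-eigenvector-cartan-orbit}.

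One inference in the last step is too quick, although its conclusion is true. You argue that $\tilde u^2 \equiv u^2 \pmod{M}$ alone gives $v_T(M,b';\alpha) = v_T\big(M, \tilde u^2(b+\alpha)-\alpha;\alpha\big)$. What is actually needed is $(\tilde u^2 - u^2)(b+\alpha) \in M\ZZ$, and since $b+\alpha$ is not an integer (in the intended application $\alpha = -\tfrac{1}{24}$), the divisibility $M \isdiv \tilde u^2 - u^2$ does not by itself yield this. The repair uses only facts already in your setup: $u^2(b+\alpha)-\alpha$ is an integer because an integral~$b'$ in its class modulo~$M$ exists, and $\tilde u^2(b+\alpha)-\alpha$ is an integer because $\tilde u \equiv 1 \pmod{N}$ and $N\alpha \in \ZZ$; hence their difference $(\tilde u^2-u^2)(b+\alpha) = Mt(b+\alpha)$ is an integer, where $\tilde u^2 - u^2 = Mt$. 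Writing $\alpha = a\slash N$, integrality forces $N \isdiv Mta$, and $\gcd(M,N)=1$ then gives $t(b+\alpha) \in \ZZ$, so the difference indeed lies in~$M\ZZ$. With this two-line patch your argument is complete and coincides with the paper's intended proof.
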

\begin{proof}
Consider the abstract space of modular forms~$\Ind_{\Mp{1}(\ZZ)}\,f$ and combine Corollary~\ref{cor:congruence-on-arithmetic-progression-hecke-T-eigenvector} with Proposition~\ref{prop:hecke-T-eigenvector-cartan-orbit}.
\end{proof}

\begin{proof}[{Proof of Proposition~\ref{prop:hecke-T-eigenvector-cartan-orbit}}]
If $N$ is even, assume that~$8 \isdiv N$ by replacing $N$ if necessary. Fix~$u$ as in the statement and let~$\ov{u} \in \ZZ$ denote its inverse modulo~$M N$. There is a transformation~$\ga \in \tdGa_0(M) \cap \tdGa(N)$ whose image~$\ov{\ga}$ in~$\SL{2}(\ZZ)$ is congruent to $\begin{psmatrix} u & 0 \\ 0 & \ov{u} \end{psmatrix}$ modulo~$M N$. For any such~$\ga$ and~$h \in \ZZ$, we have
\begin{gather*}
  \ov{\ga}^{-1}\, \begin{pmatrix} 1 & h \\ 0 & M \end{pmatrix} \,\ov{\ga}
\;\equiv\;
  \begin{pmatrix} 1 & \ov{u}^2 h \\ 0 & M \end{pmatrix}
  \;\pmod{M N}
\tx{.}
\end{gather*}
In particular, we have
\begin{gather*}
   \begin{pmatrix} 1 & h \\ 0 & M \end{pmatrix}
   \,\ga\,
   \begin{pmatrix} 1 & \ov{u}^2 h \\ 0 & M \end{pmatrix}^{-1}
\;\in\;
   \tdGa(N)
\tx{.}
\end{gather*}

The corresponding contribution to the center of~$\Mp{1}(\ZZ)$ is captured by
\begin{gather*}
  \omega_{h,u}
\,:=\,
  \begin{pmatrix} 1 & h \\ 0 & M \end{pmatrix}
  \,\ga\,
  \begin{pmatrix} 1 & \ov{u}^2 h \\ 0 & M \end{pmatrix}^{-1}
  \,\in\,
  \tdGa(N) \slash \big( \tdGa(N) \cap \ker_{\Mp{1}(\ZZ)}(V) \big)
\tx{.}
\end{gather*}
Without loss of generality, we can and will assume that the center of~$\Mp{1}(\ZZ)$ acts by scalars on~$V$. Write~$\wtd\omega_{h,u} \in \{\pm 1\}$ for the action of~$\omega_{h,u}$. We then have
\begin{align*}
  v_T(M,b;\alpha) \ga
&{}=
  v \otimes
  \sum_{h \,\pmod{M}}
  e\big(-\tfrac{h(b + \alpha)}{M} \big)\,
  \fraku_{\begin{psmatrix} 1 & h \\ 0 & M \end{psmatrix}} \ga
\\
&{}=
  v
  \otimes
  \sum_{h \,\pmod{M}}
  e\big(-\tfrac{h(b + \alpha)}{M} \big)\,
  \wtd\omega_{h,u}
  \fraku_{\begin{psmatrix} 1 & \ov{u}^2 h \\ 0 & M \end{psmatrix}}
\\
&{}=
  v
  \otimes
  \sum_{h \,\pmod{M}}
  e\big(-\tfrac{h u^2 (b + \alpha)}{M} \big)\,
  \wtd\omega_{h u^2,u}
  \fraku_{\begin{psmatrix} 1 & h \\ 0 & M \end{psmatrix}}
\tx{.}
\end{align*}
If we show that~$\omega_{h u^2, u}$ and hence~$\wtd\omega_{h u^2, u}$ is independent of~$h$, the very right hand side equals~$\pm v_T(M, u^2 (b + \alpha) - \alpha; \alpha)$ on the nose, and the proposition follows.

We will perform the computation of~$\omega_{h,u}$ in~$\Mp{1}'(\ZZ)$ as opposed to~$\Mp{1}(\ZZ)$. Notice that
\begin{gather*}
  \begin{psmatrix} 1 & h \\ 0 & M \end{psmatrix}
=
  \begin{psmatrix} 1 & 0 \\ 0 & M \end{psmatrix}
  T^h
\quad\tx{and}\quad
  T^h
=
  \big( \begin{psmatrix} 1 & h \\ 0 & 1 \end{psmatrix}, 0 \big)
  \in
  \Mp{1}'(\ZZ)
\tx{.}
\end{gather*}
This fixes the contribution of both~$\begin{psmatrix} 1 & h \\ 0 & M \end{psmatrix} \in \GMp{1}(\QQ)$ and~$\begin{psmatrix} 1 & \ov{u}^2 h \\ 0 & M \end{psmatrix} \in \GMp{1}(\QQ)$ to $\mu_2 \subset \GMp{1}'(\QQ)$. In particular, they are independent of~$h$.

We can now split the computation of~$\omega_{h,u}$ into local considerations over the~$p$-adic field~$\QQ_p$, and compare
\begin{gather}
\label{eq:prop:hecke-T-eigenvector-cartan-orbit:cocycles}
  \psi_{b,p}\big(
  \begin{psmatrix} 1 & h \\ 0 & M \end{psmatrix},\, \ov\ga
  \big)
\quad\tx{and}\quad
  \psi_{b,p}\big(
  \ov\ga, \begin{psmatrix} 1 & \ov{u}^2 h \\ 0 & M \end{psmatrix}
  \big)
\end{gather}
for each prime~$p$. If~$p \nisdiv M N$, then both are~$1$ by Theorem~2 of~\cite{kubota-1969}. We can therefore assume that~$p \isdiv M N$. Denote the bottom left entry of~$\ov{\ga}$ by~$c$. By adjusting the choice of~$\ov{\ga}$, we can and will assume that~$c \ne 0$. Inserting Kubota's formula (pp.~16, 17, 19, and~21 of~\cite{kubota-1969}), we find that the cocycle values in~\eqref{eq:prop:hecke-T-eigenvector-cartan-orbit:cocycles} are $(1,c)_p (-c,c)_p (1,1)_p = 1$ and~$(c \slash M, 1)_p (- M \slash c, c \slash M)_p = 1$, where $(\,\cdot,\cdot\,)_p$ is the Hilbert residue symbol. Both are independent of~$h$, as desired.
\end{proof}

\begin{proposition}
\label{prop:hecke-T-eigenvector-p4-case}
Let~$M$ and~$N$ be co-prime positive integers, let $(V,\phi)$ be an abstract space of modular forms, and assume that $\ker_{\Mp{1}(\ZZ)}(V)$ is a congruence subgroup of level~$N$. Let $p \isdiv M$ be a prime and $M = M_p M_p^\#$ a factorization of~$M$ satisfying~$M_p \isdiv p^4$ and~$\gcd(p,M_p^\#) = 1$. Let $W \subseteq \rmT_M\,V$ be a subrepresentation for~$\tdGa(M_p^\# N)$. Given an eigenvector~$v \in V$ for~$T \in \Mp{1}(\ZZ)$ with eigenvalue~$e(\alpha)$, $\alpha \in \QQ$, that generates a character for~$\tdGa_0(N)$, assume that $v_T(M, b; \alpha) \in W$ for some integer~$b \in M_p \ZZ$. Then we have $v_T(M, b + u M \slash p; \alpha) \in W$ for some~$u \in \ZZ$ co-prime to~$p$.
\end{proposition}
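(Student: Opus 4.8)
The plan is to mimic the proof of Proposition~\ref{prop:hecke-T-eigenvector-cartan-orbit}, but to replace the Cartan (diagonal) element, which rescales the progression parameter by a square, with a lower-unipotent element at~$p$, which instead spreads $v_T(M,b;\alpha)$ across all additive shifts $v_T(M,b+kM\slash p;\alpha)$, $k\pmod p$, through a quadratic Gauss-sum phase. Writing $M_p=p^j$ with $1\le j\le 4$, I would first use the Chinese remainder theorem to pick $\ga\in\tdGa(M_p^\#N)$ whose image $\ov\ga$ is $\equiv I\pmod{M_p^\#N}$ and $\equiv\begin{psmatrix}1&0\\c&1\end{psmatrix}\pmod{p^j}$ with $v_p(c)=j-1$; this is legitimate precisely because $\tdGa(M_p^\#N)$ imposes no congruence at~$p$. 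Since then $c\equiv0\pmod p$, the first column $\big(a+hc,\,Mc\big)$ of $\begin{psmatrix}1&h\\0&M\end{psmatrix}\ov\ga$ has top entry coprime to~$M$, so each coset remains in the family $\big\{\begin{psmatrix}1&h'\\0&M\end{psmatrix}\big\}$, and I obtain a factorization $\begin{psmatrix}1&h\\0&M\end{psmatrix}\ga=\ga_h''\,\begin{psmatrix}1&h'\\0&M\end{psmatrix}$ with $h'\equiv(a+hc)^{-1}(\beta+hd)\pmod M$ and $\ga_h''\in\tdGa_0(N)$, so that $v\ga_h''=\chi(\ga_h'')v$ for the character $\chi$ generated by~$v$.

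The core is the phase computation. For $j\ge 2$ one has $h'\equiv h-p^{j-1}c'h^2\pmod{p^j}$ (with $c=p^{j-1}c'$), hence $h\equiv h'+p^{j-1}c'h'^2$, and the hypothesis $b\in M_p\ZZ$, i.e.\ $b\equiv0\pmod{p^j}$, collapses the reindexed exponent $e\big(-\tfrac{(b+\alpha)(h-h')}{M}\big)$ to the clean quadratic phase $e\big(-\tfrac{\alpha c'h'^2}{p}\big)$ modulo~$p$. I would then argue, exactly as in the Hilbert-symbol computation of Proposition~\ref{prop:hecke-T-eigenvector-cartan-orbit}, that the character value $\chi(\ga_h'')$ and the Kubota metaplectic cocycle contribute at most a further phase that is polynomial in $h$ of degree at most two modulo~$p$; the bound $M_p\isdiv p^4$ is what guarantees that the relevant expansions of the index map and of the local cocycle truncate at this order. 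Collecting terms gives $v_T(M,b;\alpha)\ga=\sum_{k\,\pmod p}\lambda_k\,v_T(M,b+kM\slash p;\alpha)$, where the $\lambda_k$ are the finite Fourier coefficients (normalized Gauss sums) of the resulting phase on $\ZZ\slash p$.

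It then remains to extract a single nonzero shift. Because $\gcd(p,M_p^\#N)=1$, the element $T^{M_p^\#N}$ lies in $\tdGa(M_p^\#N)$ and acts on $v_T(M,b+kM\slash p;\alpha)$ by the eigenvalue $e\big(\tfrac{(b+\alpha)M_p^\#N}{M}\big)e\big(\tfrac{kM_p^\#N}{p}\big)$, and these eigenvalues are pairwise distinct for $k\pmod p$. Since $v_T(M,b;\alpha)\in W$ and $W$ is stable under $\ga$ and under $T^{M_p^\#N}$, the combination above lies in~$W$, and projecting onto the $T^{M_p^\#N}$-eigenspace attached to any $k=u$ with $\lambda_u\ne0$ shows $v_T(M,b+uM\slash p;\alpha)\in W$; replacing $u$ by $-u$ if necessary yields the stated form.

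The hard part will be showing that some $\lambda_u$ with $u$ coprime to~$p$ is nonzero, equivalently that the total phase is non-constant on $\ZZ\slash p$. When the square class of $\alpha$ is a $p$-adic unit this is the classical nonvanishing of a nondegenerate quadratic Gauss sum. The delicate case is a degenerate square class, where the quadratic term $\alpha c'h'^2$ vanishes modulo~$p$ and the required non-constancy must instead be supplied by the Weil-index (Legendre-symbol) part of the metaplectic cocycle; controlling this term is exactly where the explicit Kubota description and the hypothesis $M_p\isdiv p^4$ enter. The remaining subtlety is the boundary case $j=1$, in which a lower-unipotent no longer preserves the coset family and the single exceptional coset must be tracked separately.
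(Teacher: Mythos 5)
Your proposal follows the paper's skeleton at the structural level: act on $v_T(M,b;\alpha)$ by an element of $\tdGa(M_p^\#N)$ that is lower-triangular unipotent at~$p$, rewrite the image in terms of shifted eigenvectors, and project inside~$W$ using $T^{M_p^\#N}$-eigenvalues, which indeed separate the shifts $b + kM\slash p$, $k \pmod{p}$. The fatal divergence is your choice of the group element. You take the lower-left entry exactly divisible by~$p^{j-1}$ (writing $M_p = p^j$), precisely so that the cosets $\begin{psmatrix}1&h\\0&M\end{psmatrix}$ are preserved and the whole effect of~$\ga$ becomes a multiplicative Gauss-sum phase; everything then hinges on that phase being non-constant modulo~$p$, which you correctly flag as ``the hard part'' and do not prove. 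This is not merely hard: it is \emph{false} for your elements in admissible instances of the proposition. Take $V$ the trivial representation of~$\Mp{1}(\ZZ)$ (so $N = 1$, $\alpha = 0$, trivial character, integral weight, no metaplectic sign), $M = M_p = p^2$, $b = 0$, and $W$ the subrepresentation generated by $v_T(M,0;0)$. Every element of your form has all its lower-left entries divisible by every prime of~$M$, hence permutes the cosets $\begin{psmatrix}1&h\\0&M\end{psmatrix}$ among themselves with all coefficients equal to~$1$, so it fixes $v_T(M,0;0)$ on the nose; your entire family of group elements yields nothing, although the proposition is true for this~$W$. Your fallback --- that non-constancy is supplied by the Weil-index part of the Kubota cocycle --- cannot rescue this, since a representation factoring through $\SL{2}(\ZZ)$ with trivial multiplier receives no metaplectic contribution at all. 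So the gap cannot be closed within your setup; the conclusion must be witnessed by different group elements.

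Those different elements are exactly what the paper uses: $\ov\ga \equiv \begin{psmatrix}1&0\\1&1\end{psmatrix} \pmod{M_p}$, i.e.\ lower-left entry a $p$-adic \emph{unit}. Then the cosets do not stay in the family but scatter into strata $\begin{psmatrix}p^n&*\\0&M\slash p^n\end{psmatrix}$ according to $n$, the $p$-adic valuation of $1 + h_p$, and the $n=0$ stratum is supported exactly on the cosets $\begin{psmatrix}1&x\\0&M\end{psmatrix}$ with $x \not\equiv 1 \pmod{p}$. That missing residue class forces nontrivial Fourier content at the frequencies $u\slash p$ with $p \nisdiv u$ \emph{regardless} of~$\alpha$, of the character, and of any metaplectic signs --- this support argument replaces your Gauss-sum nonvanishing and is what makes the degenerate case harmless. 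The price is the stratification analysis: one must show $T^{M_p^\#N}$ acts trivially on all strata with $n \ne 0$, and the case $n = 1$ with $M_p \nisdiv p^2$ is precisely where the paper's Kubota cocycle computations and the hypothesis $M_p \isdiv p^4$ do real work. Note that this is also exactly the ``single exceptional coset'' bookkeeping you postpone in your $j = 1$ case, where your element has unit lower-left entry and coincides with the paper's choice. In short, the two gaps you acknowledge are not loose ends but the entire substance of the proof, and the first one is unfixable with your choice of elements.
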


\begin{corollary}
\label{cor:improving-0-square-class}
Fix a number field $K \subseteq \CC$ and an ideal~$\ell \subseteq \cO_K$. Let $f \in \rmM_k(\tdGa_0(N),\chi)$ be a modular form for a character~$\chi$ whose kernel has level~$N$ with Fourier coefficients~$c(f;\,n) \in \cO_{K,\ell}$ supported on~$n \in \alpha + \ZZ$ for some~$\alpha \in \frac{1}{N}\ZZ$. Assume that~$f$ satisfies the congruences
\begin{gather*}
  \forall n \in \ZZ \,:\, c(f;\,M n + b+ \alpha) \equiv 0 \;\pmod{\ell}
\end{gather*}
for some positive integer~$M$ that is co-prime to~$N$ and some~$b \in M_p \ZZ$, where $M = M_p M_p^\#$ is a factorization of~$M$ satisfying~$M_p \isdiv p^4$ and~$\gcd(p,M_p^\#) = 1$. Then
\begin{gather*}
  \forall n \in \ZZ \,:\, c(f;\,M n + b + u M \slash p + \alpha) \equiv 0 \;\pmod{\ell}
\end{gather*}
for some~$u \in \ZZ$ co-prime to~$p$.
\end{corollary}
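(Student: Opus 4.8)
The plan is to follow the strategy of the proof of Corollary~\ref{cor:congruences-on-square-classes}, replacing the Cartan move of Proposition~\ref{prop:hecke-T-eigenvector-cartan-orbit} by the $p$-translation of Proposition~\ref{prop:hecke-T-eigenvector-p4-case}. First I would attach to~$f$ the induced abstract space of weakly holomorphic modular forms $(V,\phi) := \Ind_{\tdGa_0(N)}\,f$ via~\eqref{eq:def:induction}, so that $\ker_{\Mp{1}(\ZZ)}(V)$ is a congruence subgroup of level~$N$ and the canonical generator~$v$ of~$V$ is a $T$-eigenvector with eigenvalue~$e(\alpha)$ that generates the character~$\chi$ of~$\tdGa_0(N)$. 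Applying the Hecke operator~$\rmT_M$ and invoking Corollary~\ref{cor:congruence-on-arithmetic-progression-hecke-T-eigenvector}, the hypothesis that $c(f;\,M n + b + \alpha) \equiv 0 \pmod{\ell}$ for all~$n \in \ZZ$ is equivalent to the single membership statement $v_T(M,b;\alpha) \in \ker_\ell\big(\rmT_M(V,\phi)\big)$.

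I would then set $W := \ker_\ell\big(\rmT_M(V,\phi)\big)$ and feed it into Proposition~\ref{prop:hecke-T-eigenvector-p4-case}, whose remaining hypotheses --- that~$v$ generates a character for~$\tdGa_0(N)$ and that~$b \in M_p\ZZ$ --- are in hand. The conclusion $v_T(M, b + u M \slash p; \alpha) \in W$ for some~$u \in \ZZ$ coprime to~$p$ would then, again through Corollary~\ref{cor:congruence-on-arithmetic-progression-hecke-T-eigenvector} (now with~$b$ replaced by~$b + u M \slash p$, which is an integer because $p \isdiv M$), translate back into the desired congruences $c(f;\,M n + b + u M \slash p + \alpha) \equiv 0 \pmod{\ell}$ for all~$n \in \ZZ$.

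The one step that is not purely formal, and the main obstacle, is verifying the hypothesis of Proposition~\ref{prop:hecke-T-eigenvector-p4-case} that~$W$ is a subrepresentation for the principal congruence subgroup~$\tdGa(M_p^\# N)$. Here Theorem~\ref{thm:ell-kernel-representation} does the work: applied to~$\rmT_M(V,\phi)$ it shows that~$\ker_\ell\big(\rmT_M(V,\phi)\big)$ is a module for~$\tdGa_0(N_\ell)$, where~$N_\ell$ is the~$\ell$-primary part of the level~$L$ of~$\rmT_M(V,\phi)$. Since~$\tdGa(M_p^\# N) \subseteq \tdGa_0(N_\ell)$ as soon as~$N_\ell \isdiv M_p^\# N$, the subrepresentation property then follows by restriction, and the remaining task is a level computation: one must bound the level~$L$ of the Hecke-transformed space, whose prime divisors lie among those of~$MN$, and check that its~$\ell$-primary part divides~$M_p^\# N$, so that no residual congruence at~$p$ is imposed on~$W$. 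I expect this to be the delicate point, and I expect the hypothesis~$M_p \isdiv p^4$ to be exactly what is needed to absorb the full~$p$-part of~$L$ into the admissible level --- including, in half-integral weight, the factor contributed by~$\lcm(4,\cdot)$ in Theorem~\ref{thm:ell-kernel-representation} when~$p = 2$.
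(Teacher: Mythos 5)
Your proposal follows exactly the route of the paper's own (one-line) proof: induce $f$ to an abstract space $(V,\phi)$, use Corollary~\ref{cor:congruence-on-arithmetic-progression-hecke-T-eigenvector} to convert the hypothesis into $v_T(M,b;\alpha) \in \ker_\ell\big(\rmT_M(V,\phi)\big)$, apply Proposition~\ref{prop:hecke-T-eigenvector-p4-case} with $W = \ker_\ell\big(\rmT_M(V,\phi)\big)$, and convert back. Your verification of the hypotheses on $v$ and $b$ is correct (and your reading $\Ind_{\tdGa_0(N)} f$ is the right interpretation of the paper's notation), and you correctly isolate the one non-formal point: that $W$ is a $\tdGa(M_p^{\#} N)$-subrepresentation, which must come from Theorem~\ref{thm:ell-kernel-representation} together with the divisibility $N_\ell \isdiv M_p^{\#} N$.

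However, your diagnosis of how that divisibility is achieved is wrong, and as written this last step remains a gap. The hypothesis $M_p \isdiv p^4$ plays no role in it: that hypothesis is consumed entirely inside the proof of Proposition~\ref{prop:hecke-T-eigenvector-p4-case}, where it guarantees $M_p \isdiv p^{2n}$ for $n \ge 2$, so that $T' = T^{M_p^{\#} N}$ acts trivially on the corresponding contributions to~\eqref{eq:prop:hecke-T-eigenvector-p4-case:pn-contribution}. It cannot ``absorb the $p$-part of $L$ into the admissible level,'' because the admissible level $M_p^{\#} N$ is coprime to $p$ by hypothesis: if $N_\ell$ carried any positive power of $p$, then $N_\ell \nmid M_p^{\#} N$, no matter how small $M_p$ is. What actually rescues the step is coprimality of $p$ and $\ell$: the level $L$ of $\rmT_M(V,\phi)$ divides $MN = M_p M_p^{\#} N$ (up to a factor at $2$ already built into Theorem~\ref{thm:ell-kernel-representation}), and $N_\ell$ retains only those prime factors of $L$ lying below the ideal $\ell$; so the $p$-part of $L$, which is generically as large as $M_p$, drops out of $N_\ell$ precisely when the rational prime $p$ does not lie below $\ell$, leaving $N_\ell$ a divisor of $M_p^{\#} N$ as required. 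Note that $\ell$ is allowed to divide $M$ --- it does in the application in Theorem~\ref{mainthm:abundance}, where $M = Q^4 \ell$ --- but only through $M_p^{\#}$. This coprimality of $p$ and $\ell$ is not among the stated hypotheses of the corollary (it is implicit, and it holds in all of the paper's applications, where $p = Q \equiv -1 \pmod{24 \ell}$); your argument, like the paper's, needs it, and the mechanism you propose, $M_p \isdiv p^4$, does not supply it.
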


\begin{corollary}
\label{cor:improving-nonzero-square-class}
Let~$K$, $\ell$, $f$, $N$, $\alpha$, $M$, $M_p$, $M_p^\#$ be as in Corollary~\ref{cor:improving-0-square-class}. Fix~$b \in M_p \ZZ$. Then there is $u \in \ZZ$ co-prime to~$p$ with the following property: If~$f$ satisfies the congruence
\begin{gather*}
  \forall n \in \ZZ \,:\, c(f;\,M n + b + u M \slash p + \alpha) \equiv 0 \;\pmod{\ell}
\tx{,}
\end{gather*}
then it also satisfies the congruences
\begin{gather*}
  \forall n \in \ZZ \,:\, c(f;\,M n + b + \alpha) \equiv 0 \;\pmod{\ell}
\tx{.}
\end{gather*}
\end{corollary}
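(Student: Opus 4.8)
The plan is to carry the statement into the representation-theoretic language of Proposition~\ref{prop:hecke-T-eigenvector-p4-case} and then to read the relation furnished by its proof \emph{backwards}. First I would set up exactly the objects used in the proof of Corollary~\ref{cor:improving-0-square-class}: pass to the abstract space $\Ind_{\Mp{1}(\ZZ)} f$, form the Hecke translate $\rmT_M(\Ind_{\Mp{1}(\ZZ)} f)$, and put $W := \ker_\ell\big( \rmT_M(\Ind_{\Mp{1}(\ZZ)} f) \big)$. By Theorem~\ref{thm:ell-kernel-representation} this $W$ is a subrepresentation for $\tdGa(M_p^\# N)$, and $v = f$ is a $T$-eigenvector of eigenvalue $e(\alpha)$ generating the character $\chi$ of $\tdGa_0(N)$, so all hypotheses of Proposition~\ref{prop:hecke-T-eigenvector-p4-case} are in force. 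By Corollary~\ref{cor:congruence-on-arithmetic-progression-hecke-T-eigenvector}, each congruence on a progression $M\ZZ + b' + \alpha$ is equivalent to the membership $v_T(M,b';\alpha) \in W$.

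It helps to first record a purely logical simplification. Writing $P(u)$ for ``$v_T(M,b+uM\slash p;\alpha)\in W$'' and $Q$ for ``$v_T(M,b;\alpha)\in W$'', the assertion has the form $\exists u\,[P(u)\Rightarrow Q]$, which is equivalent to $\big(\forall u\,P(u)\big)\Rightarrow Q$. Hence it suffices to show that the $0$-class eigenvector $v_T(M,b;\alpha)$ lies in the subrepresentation of $\rmT_M\,V$ generated by the nonzero-class eigenvectors $v_T(M,b+uM\slash p;\alpha)$, $\gcd(u,p)=1$.

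To produce such a containment I would invert the construction inside Proposition~\ref{prop:hecke-T-eigenvector-p4-case}. That proof exhibits a specific $u$ coprime to $p$ and a specific $\ga \in \tdGa(M_p^\# N)$ for which the right action of $\ga$ on the $T$-eigenvector $v_T(M,b;\alpha)$ reindexes the Hecke cosets $\begin{psmatrix}1 & h \\ 0 & M\end{psmatrix}$ and, after Fourier inversion in $h$ as in~\eqref{eq:def:hecke-T-eigenvectors}, yields the relation
\begin{gather*}
  v_T(M,b;\alpha)\,\ga
\;=\;
  \lambda\, v_T(M, b + u M \slash p; \alpha)
\tx{,}\quad
  \lambda \in \CC^\times
\tx{.}
\end{gather*}
Because $\ga$ is a group element, $\ga^{-1} \in \tdGa(M_p^\# N)$ and $W$ is stable under $\ga^{-1}$. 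Thus, granting the hypothesis $v_T(M,b+uM\slash p;\alpha)\in W$ of the claim for this very $u$, the identity rearranges to $v_T(M,b;\alpha) = \lambda^{-1}\,v_T(M,b+uM\slash p;\alpha)\,\ga^{-1} \in W$; translating back through Corollary~\ref{cor:congruence-on-arithmetic-progression-hecke-T-eigenvector} gives the stated congruence on $M\ZZ + b + \alpha$. (Even if $\ga$ should spread $v_T(M,b;\alpha)$ over several nonzero classes rather than a single one, the reformulation of the previous paragraph shows this is harmless, since then \emph{all} targets lie in $W$ by assumption.)

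The main obstacle is the verification that the displayed relation is genuinely a \emph{single-frequency} shift with $\lambda\neq 0$, rather than a combination dispersed over classes of intermediate $p$-adic valuation outside $W$. A generic element of $\tdGa(M_p^\# N)$ that alters the valuation of the frequency acts by a quadratic metaplectic Gauss sum, supported a priori on many classes. The content—and the reason the hypotheses $M_p \isdiv p^4$, $b \in M_p\ZZ$, and ``$\alpha$ generates a character for $\tdGa_0(N)$'' are exactly what is consumed—is that for the $0$-class these hypotheses make the quadratic metaplectic Gauss sum collapse to one additive character, of frequency $b+uM\slash p$, with nonzero Gauss-sum prefactor $\lambda$: the bound $M_p\isdiv p^4$ limits the $p$-power so that this explicit local evaluation is available, $b\in M_p\ZZ$ fixes the $0$-class, and the $\tdGa_0(N)$-character condition pins down the Kubota cocycle contributions. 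This local evaluation is precisely the computation driving Proposition~\ref{prop:hecke-T-eigenvector-p4-case}, so once it is in hand the reverse implication is immediate.
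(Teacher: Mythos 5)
Your reduction is set up correctly: passing to $\Ind_{\Mp{1}(\ZZ)}f$, taking $W=\ker_\ell\big(\rmT_M(\Ind_{\Mp{1}(\ZZ)}f)\big)$, invoking Corollary~\ref{cor:congruence-on-arithmetic-progression-hecke-T-eigenvector}, and observing that $\exists u\,[P(u)\Rightarrow Q]$ is classically equivalent to $\big(\forall u\,P(u)\big)\Rightarrow Q$ is a legitimate (and genuinely clarifying) reformulation that the paper does not spell out. The gap is in the step that is supposed to do all the work. The proof of Proposition~\ref{prop:hecke-T-eigenvector-p4-case} does \emph{not} furnish a relation $v_T(M,b;\alpha)\,\ga=\lambda\,v_T(M,b+uM\slash p;\alpha)$ with $\lambda\in\CC^\times$, and no $\ga\in\tdGa(M_p^\#N)$ can satisfy one. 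The element actually used there is congruent to $\begin{psmatrix} 1 & 0 \\ 1 & 1 \end{psmatrix}$ modulo~$M_p$, and right multiplication by it moves every coset $\begin{psmatrix} 1 & h \\ 0 & M \end{psmatrix}$ with $p\isdiv(1+h_p)$ into a coset with Hermite representative $\begin{psmatrix} p^n & \ast \\ 0 & M\slash p^n \end{psmatrix}$, $n\ge 1$; these lie outside the span of the vectors $v_T(M,\cdot\,;\alpha)$ altogether, so $v_T(M,b;\alpha)\ga$ is never a linear combination of such vectors, let alone a single one. Conversely, an element whose right action does preserve that span is forced to be upper triangular modulo~$M$ (otherwise some coset leaves the stratum), and such elements act by the Cartan formula $b+\alpha\mapsto u'^2(b+\alpha)$ of Proposition~\ref{prop:hecke-T-eigenvector-cartan-orbit}, which preserves the $p$-adic valuation of the frequency and therefore cannot exchange the zero class with the classes $b+uM\slash p$, $\gcd(u,p)=1$. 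In the paper the two classes are related only after projecting $v_T(M,b;\alpha)\ga$ onto eigenspaces of $T'=T^{M_p^\#N}$; an eigenprojection is an idempotent in the group algebra, not an invertible element, so there is no ``$\ga^{-1}$'' with which to read the relation backwards. This asymmetry is exactly why the paper states Corollaries~\ref{cor:improving-0-square-class} and~\ref{cor:improving-nonzero-square-class} as two separate one-directional implications with independently quantified~$u$; your argument, if it worked, would prove the stronger two-sided equivalence.

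Your parenthetical fallback does not repair this. Under the hypothesis $\forall u\,P(u)$ you control only the vectors $v_T(M,b+uM\slash p;\alpha)$ with $\gcd(u,p)=1$, whereas $v_T(M,b;\alpha)\ga$ also contains the stratum components $v\ga_h\otimes\fraku_{\begin{psmatrix} p^n & \ast \\ 0 & M\slash p^n \end{psmatrix}}$, $n\ge1$, which are not $v_T$-vectors and about which the hypothesis says nothing, and a priori components along $v_T(M,b'';\alpha)$ for classes $b''$ other than the $b+uM\slash p$ (including $v_T(M,b;\alpha)$ itself and, when $p^2\isdiv M_p$, intermediate classes). There is also a secondary coefficient issue: $\ker_\ell$ is only an $\cO_{K,\ell}$-module, so even a containment of $v_T(M,b;\alpha)$ in the $\CC$-span of translates of controlled vectors would not suffice without controlling denominators. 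What the corollary actually requires --- and what the paper's one-line proof implicitly delegates to the proof of Proposition~\ref{prop:hecke-T-eigenvector-p4-case} --- is to rerun that proof's strata decomposition, Kubota-cocycle computation, and $T'$-eigenprojection so as to produce the zero-class vector, with an invertible coefficient, out of the assumed nonzero-class memberships. Your proposal asserts this ``collapse to a single additive character'' instead of proving it, and the assertion is false for the element the proposition uses.
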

\begin{proof}[{Proof of Corollaries~\ref{cor:improving-0-square-class} and~\ref{cor:improving-nonzero-square-class}}]
Consider $\Ind_{\Mp{1}(\ZZ)}\,f$,the abstract space of modular forms associated with~$f$ via~\eqref{eq:def:induction}, and combine Corollary~\ref{cor:congruence-on-arithmetic-progression-hecke-T-eigenvector} with Proposition~\ref{prop:hecke-T-eigenvector-p4-case}.
\end{proof}

\begin{proof}[{Proof of Proposition~\ref{prop:hecke-T-eigenvector-p4-case}}]
If $N$ is even, assume that~$8 \isdiv N$ by replacing $N$ if necessary. We use a similar reasoning as in the proof of Proposition~\ref{prop:hecke-T-eigenvector-cartan-orbit} employing a transformation~$\ga \in \tdGa(M_p^\# N)$ whose image~$\ov{\ga}$ in~$\SL{2}(\ZZ)$ is congruent modulo~$M_p$ to~$\begin{psmatrix} 1 & 0 \\ 1 & 1 \end{psmatrix}$.

We split up~$h = h_p + h_p^\#$, yielding the following expression for~$v_T(M,b;\alpha)$:
\begin{gather*}
  v_T(M,b;\alpha)
\;=\;
  v \otimes
  \sum_{\substack{h_p \,\pmod{M_p}\\ h_p^\# \,\pmod{M_p^\#}}}
  e\big( - \tfrac{h_p \alpha}{M_p} - \tfrac{h_p^\# (b + \alpha)}{M_p^\#} \big)
  \fraku_{\begin{psmatrix} 1 & h \\ 0 & M \end{psmatrix}}
\tx{,}
\end{gather*}
where used the assumption that $M_p$ divides~$b$ and we assume that every $h_p$ is divisible by~$M_p^\# N$ and every $h_p^\#$ is divisible by~$M_p N$. We let~$1_p \in M_p^\# N \ZZ$ be congruent to~$1 \,\pmod{M_p}$. Further, we factorize $1 + h_p = p^n h'_p$ where~$\gcd(p,h'_p) = 1$ and let $\ov{h}'_p$ be an inverse of~$h'_p$ modulo~$M N \slash p^n$. It is helpful to record that~$1_p M \equiv 0 \,\pmod{M N}$ and~$h'_p \equiv 1 \slash p^n \,\pmod{M_p^\# N}$. We have
\begin{gather*}
  \ga_h
\;:=\;
  \begin{pmatrix} 1 & h \\ 0 & M \end{pmatrix}
  \,\ga\,
  \begin{pmatrix} p^n & \ov{h}'_p h \\ 0 & M \slash p^n \end{pmatrix}^{-1}
  \,\in\,
  \tdGa_0(N)
\tx{,}
\end{gather*}
since
\begin{gather*}
  \begin{pmatrix} 1 & h \\ 0 & M \end{pmatrix}
  \ov{\ga}
\;\equiv\;
  \begin{pmatrix} 1 & h \\ 0 & M \end{pmatrix}
  \begin{pmatrix} 1 & 0 \\ 1_p & 1 \end{pmatrix}
\;\equiv\;
  \begin{pmatrix} 1 + h_p & h \\ 0 & M \end{pmatrix}
\equiv\;
  \begin{pmatrix} h'_p & 0 \\ 0 & \ov{h}'_p \end{pmatrix}
  \begin{pmatrix} p^n & \ov{h}'_p h \\ 0 & h'_p M \end{pmatrix}
  \;\pmod{M N}
\tx{.}
\end{gather*}
This yields the expression
\begin{gather}
\label{eq:prop:hecke-T-eigenvector-p4-case:pn-contribution}
  v_T(M,b;\alpha) \ga
\;=\;
  \sum_{\substack{p^n \isdiv M_p \\ h'_p \,\pmod{M_p \slash p^n}^\times \\ h_p^\# \,\pmod{M_p^\#}}}
  e\big( - \tfrac{h_p \alpha}{M_p} - \tfrac{h_p^\# (b + \alpha)}{M_p^\#} \big)
  v \ga_h\, \otimes
  \fraku_{\begin{psmatrix} p^n & \ov{h}'_p h \\ 0 & M \slash p^n \end{psmatrix}}
\tx{.}
\end{gather}
By the assumptions~$\ga_h$ acts on~$v$ by a scalar~$\wtd\ga_h$.

Without loss of generality, we can and will assume that~$V$ is irreducible. We next inspect the decomposition of~\eqref{eq:prop:hecke-T-eigenvector-p4-case:pn-contribution} into eigenvectors of the~$M_p^\# N$\thdash\ power~$T'$ of~$T \in \Mp{1}(\ZZ)$. We have~$T' \in \tdGa(M_p^\# N)$. As a consequence, every such eigenvector is contained in~$W$, since the center of~$\Mp{1}(\ZZ)$ acts by scalars. If~$M_p \isdiv p^{2n}$, then
\begin{gather}
\label{eq:prop:hecke-T-eigenvector-p4-case:T'-action}
  \begin{pmatrix} p^n & \ov{h}'_p h \\ 0 & M \slash p^n \end{pmatrix}
  T'
\;=\;
  \begin{pmatrix} p^n & p^n M_p^\# N + \ov{h}'_p h \\ 0 & M \slash p^n \end{pmatrix}
\;=\;
  T^{p^{2n} \slash M_p}
  \begin{pmatrix} p^n & \ov{h}'_p h \\ 0 & M \slash p^n \end{pmatrix}
\tx{.}
\end{gather}
In particular, $T'$ acts trivially on contributions to~\eqref{eq:prop:hecke-T-eigenvector-p4-case:pn-contribution} from~$n \ge 2$ or from $n = 1$ if $M_p \isdiv p^2$. We want to show that this also holds for~$n = 1$ if $M_p \nisdiv p^2$.

We notice that the image~$\ov\ga_h$ of~$\ga_h$ in~$\SL{2}(\ZZ)$ is congruent modulo~$N$ to a diagonal matrix with nonzero entries~$1 \slash p^n$ and~$p^n$. In particular, for fixed~$n = 1$, $\wtd\ga_h$ is constant up to a possible contribution of~$\tdGa(N) \slash (\tdGa(N) \cap \ker_{\Mp{1}(\ZZ)}(V))$. We will determine that contribution as in the proof of Proposition~\ref{prop:hecke-T-eigenvector-cartan-orbit}. To this end, we observe that we have $\ov{h}'_p h \equiv p - \ov{h}'_p \,\pmod{M_p}$. Therefore, we can record that the value of~$h'_p \,\pmod{p}$ remains constant, when~$T'$ acts as on the left hand side of~\eqref{eq:prop:hecke-T-eigenvector-p4-case:T'-action}. It remains to determine the cocycles
\begin{gather}
\label{eq:prop:hecke-T-eigenvector-p4-case:cocycles}
  \psi_{b,q}\Big(
  \begin{psmatrix} 1 & h \\ 0 & M \end{psmatrix},\,
  \ov{\ga}
  \Big)
\quad\tx{and}\quad
  \psi_{b,q}\Big(
  \ov\ga_h,\,
  \begin{psmatrix} p^n & \ov{h}'_p h \\ 0 & M \slash p^n \end{psmatrix}
  \Big)
\end{gather}
for primes~$q \isdiv M N$ provided that~$h'_p \,\pmod{p}$ is constant. Employing Kubota's formula as before, we find that the first cocycle value in~\eqref{eq:prop:hecke-T-eigenvector-p4-case:cocycles} equals~$(1,1)_q (-1,1)_q (1,1)_q = 1$. The second one simplifies to
\begin{gather*}
  \big( 1 \slash h'_p, 1 \slash p \big)_q
  \big( -h'_p \slash p, 1 \slash h'_p p \big)_q
  \big( M, 1 \slash h'_p \big)_q
\tx{.}
\end{gather*}
This equals~$1$ if~$q \ne p$ and it is constant provided that~$h'_p$ is constant modulo~$p$.

Summarizing, we have shown that $T'$ acts trivially on the contributions to~\eqref{eq:prop:hecke-T-eigenvector-p4-case:pn-contribution} from~$n \ne 0$. It does not act trivially on contributions to~\eqref{eq:prop:hecke-T-eigenvector-p4-case:pn-contribution} from~$n = 0$, since all corresponding eigenvectors are linear combinations of~$v_T(M,b';\alpha)$ with $M_p \isdiv b'$. Since the eigenvalue under the $M_p$\thdash\ power of~$T$ did not change, we conclude that $v_T(M,b + u M \slash p;\, \alpha) \in W$ for some~$u$ that is co-prime to~$p$, as stated in the proposition.
\end{proof}

\section{Proof of the main theorems}
\label{sec:proof-of-main-theorems}

The proofs of Theorems~\ref{mainthm:atkin-obrien} and~\ref{mainthm:abundance} rely on the generating function
\begin{gather}
\label{eq:partition-generating-function}
  \eta^{-1}(\tau)
\;=\;
  \sum_{n \in - \frac{1}{24} + \ZZ}
  p\big( n + \tfrac{1}{24} \big) e(n \tau)
\tx{,}
\end{gather}
which is a weakly holomorphic modular form for a specific character~$\chi$ of~$\Mp{1}(\ZZ)$ whose kernel has level~$24$.

\begin{proof}[{Proof of Theorem~\ref{mainthm:atkin-obrien}}]
Combining the assumption with~\eqref{eq:partition-generating-function}, we find that
\begin{gather*}
  c\big( \eta^{-1};\, A n + B - \tfrac{1}{24} \big)
\;\equiv\;
  0
  \;\pmod{\ell^m}
\end{gather*}
for all~$n \in \ZZ$. We apply Corollaries~\ref{cor:congruence-on-arithmetic-progression-hecke-T-eigenvector} and~\ref{cor:congruences-on-square-classes} to find that for all~$u \in \ZZ$ that are co-prime to~$A$ and congruent to~$1$ modulo~$24$, we have further congruences
\begin{gather*}
  c\big( \eta^{-1};\, A n + u^2 (B - \tfrac{1}{24}) \big)
\;\equiv\;
  0
  \;\pmod{\ell^m}
\end{gather*}
for all integers~$n$. Translating back to the partition function and rewriting slightly, we have
\begin{gather*}
  p\Big( \frac{24 A n + u^2 (24 B - 1) + 1}{24} \Big)
\;\equiv\;
  0
  \;\pmod{\ell^m}
\end{gather*}
for all~$n \in \ZZ$.

It remains to describe the set of integers
\begin{gather}
\label{eq:mainthm:atkin-obrien:indices}
  A n + u^2 (24 B - 1)
\tx{,}\quad
  n \in \ZZ,  u \in \ZZ, \gcd(u, A) = 1
\tx{,}
\end{gather}
where we have suppressed the factor~$24$ in front of~$A$ and then the condition~$u \equiv 1 \,\pmod{24}$, which is legitimate, since~$p(n) = 0$ for nonintegral~$n$. To ease notation, set~$\td{B} = 24 B - 1$. We have to show that~\eqref{eq:mainthm:atkin-obrien:indices} runs through all integers of the form~$A' n'$ with $n' (\ZZ \slash A' \ZZ)^{\times\,2} = \td{B} (\ZZ \slash A' \ZZ)^{\times\,2}$, where $A'$ is as in the theorem.

Let~$A''$ be the smallest divisor of~$A$ such that
\begin{multline*}
  \big\{
  A'' n + u^2 \td{B}
  \,:\,
  n \in \ZZ, u \in \ZZ, \gcd(u, A'') = 1
  \big\}
\\
=
  \big\{
  A n + u^2 \td{B}
  \,:\,
  n \in \ZZ, u \in \ZZ, \gcd(u, A) = 1
  \big\}
\tx{.}
\end{multline*}
Clearly, $A' \isdiv A''$. It suffices to show that~$A'' \isdiv A'$. If this is not true, then there is a prime~$Q \isdiv A''$ such that $Q^2 \gcd(A'',\td{B}) \isdiv A''$. Then~$\td{B} + A'' \slash Q$ lies in the same square-class modulo~$A$ as~$\td{B}$. We can therefore replace~$A''$ by~$A'' \slash Q$, which contradicts minimality of~$A''$.
\end{proof}

\begin{proof}[{Proof of Theorem~\ref{mainthm:abundance}}]
We start the proof by employing Theorem~1 of~\cite{ahlgren-ono-2001}. Let~$\delta_\ell$ be an inverse of~$-24 \,\pmod{\ell}$. Fix~$\epsilon_\ell$ as in our Theorem~\ref{mainthm:abundance} and observe that there is at least one~$\beta \in S_\ell$ of~\cite{ahlgren-ono-2001} such that~$\big(\frac{\beta + \delta_\ell}{\ell} \big) = \epsilon_\ell$. Then Theorem~1 of~\cite{ahlgren-ono-2001} states that a positive proportion of primes~$Q \equiv -1 \,\pmod{24 \ell}$ has the property that
\begin{gather*}
  p\Big( \frac{Q^3 (24 \ell n + 1 - 24 \beta) + 1}{24} \Big)
\;\equiv\;
  0
  \;\pmod{\ell^m}
\end{gather*}
for integers~$n$ such that~$24 \ell n + 1 - 24 \beta$ is co-prime to~$Q$. In order to accommodate this condition, we replace~$n$ by~$Q n + n_0$, where $n_0 \in \ZZ$ will be fixed later subject to the condition that~$\gcd(Q, 24 \ell n_0 + 1 - 24 \beta) = 1$. Translating the resulting partition congruence into a congruence for the Fourier coefficients of~$\eta^{-1}$, we have
\begin{gather}
\label{eq:mainthm:abundance:q4-congruence}
  c\Big( \eta^{-1};\, Q^4 \ell n + Q^3 \frac{24 \ell n_0 + 1 - 24 \beta}{24} \Big)
\;\equiv\;
  0
  \;\pmod{\ell^m}
\end{gather}
for all~$n \in \ZZ$.

Let~$n'_0$ be such that~$Q \isdiv 24 \ell n'_0 + 1 - 24 \beta$. We apply Corollary~\ref{cor:improving-nonzero-square-class} with~$M \leadsto Q^4 \ell$, $N \leadsto 24$, $\alpha \leadsto -1 \slash 24$, and~$b \leadsto (Q^3 (24 \ell n'_0 + 1 - 24 \beta) + 1) \slash 24$. We can now fix~$n_0 = n'_0 + u$, where~$u$ is as in Corollary~\ref{cor:improving-nonzero-square-class}. The congruences in~\eqref{eq:mainthm:abundance:q4-congruence} then imply via Corollary~\ref{cor:improving-nonzero-square-class} that
\begin{gather}
\label{eq:mainthm:abundance:q3-congruence}
  c\Big( \eta^{-1};\, Q^3 \ell n + Q^3 \frac{1 - 24 \beta}{24} \Big)
\;\equiv\;
  0
  \;\pmod{\ell^m}
\end{gather}
for all~$n \in \ZZ$.

We next apply Corollary~\ref{cor:improving-0-square-class} to~\eqref{eq:mainthm:abundance:q3-congruence} and obtain a congruence
\begin{gather*}
  c\Big( \eta^{-1};\, Q^3 \ell n + Q^3 \frac{1 - 24 \beta}{24} + 24 Q^2 \ell u' \Big)
\;\equiv\;
  0
  \;\pmod{\ell^m}
\end{gather*}
for all~$n \in \ZZ$ and for some~$u'$ that is co-prime to~$Q$.

To finish the proof, we let~$\epsilon_Q = \big(\frac{24 \ell u'}{Q} \big)$ and apply Corollary~\ref{cor:congruences-on-square-classes} as in the proof of Theorem~\ref{mainthm:atkin-obrien}.
\end{proof}

\renewbibmacro{in:}{}
\renewcommand{\bibfont}{\normalfont\small\raggedright}
\renewcommand{\baselinestretch}{.8}

\Needspace*{4em}
\begin{multicols}{2}
\printbibliography[heading=none]%
\end{multicols}

\Needspace*{3\baselineskip}
\noindent
\rule{\textwidth}{0.15em}

{\noindent\small
Chalmers tekniska högskola och G\"oteborgs Universitet,
Institutionen för Matematiska vetenskaper,
SE-412 96 Göteborg, Sweden\\
E-mail: \url{martin@raum-brothers.eu}\\
Homepage: \url{http://raum-brothers.eu/martin}
}%

\end{document}

